    \setlist[enumerate,1]{label=\textnormal{(\alph*)}}
    \setlist[enumerate,2]{label=\textnormal{(\roman*)}}
\theoremstyle{plain}
\newtheorem{theorem}{Theorem}
\newtheorem{corollary}[theorem]{Corollary}
\theoremstyle{remark}
\theoremstyle{definition}
\newtheorem*{question*}{Question}
\newtheorem{observation}{Observation}
\newcommand{\ka}[2]{$#1\textnormal{a}_{#2}$}
\newcommand{\kn}[2]{$#1\textnormal{n}_{#2}$}
\title{Space-Efficient Knot Mosaics for Prime Knots with Mosaic Number 6}
\author{Aaron Heap and Douglas Knowles}
\begin{document}

\maketitle
\begin{abstract} In 2008, Kauffman and Lomonaco introduce the concepts of a knot mosaic and the mosaic number of a knot or link, the smallest integer $n$ such that a knot or link $K$ can be represented on an $n$-mosaic. In \cite{Heap1}, the authors explore space-efficient knot mosaics and the tile number of a knot or link, the smallest number of non-blank tiles necessary to depict the knot or link on a mosaic. They determine bounds for the tile number in terms of the mosaic number. In this paper, we focus specifically on prime knots with mosaic number 6. We determine a complete list of these knots, provide a minimal, space-efficient knot mosaic for each of them, and determine the tile number (or minimal mosaic tile number) of each of them.
\end{abstract}

%\tableofcontents

\section{Introduction}

Mosaic knot theory was first introduced by Kauffman and Lomonaco in the paper \textit{Quantum Knots and Mosaics}
\cite{Lom-Kauff} and was later proven to be equivalent to tame knot theory by Kuriya and Shehab in the paper \textit{The Lomonaco-Kauffman Conjecture} \cite{Kuriya}. The idea of mosaic knot theory is to create a knot or link diagram on an $n \times n$ grid using \textit{mosaic tiles} selected from the collection of eleven tiles shown in Figure \ref{tiles}. The knot or link projection is represented by arcs, line segments, or crossings drawn on each tile. These tiles are identified, respectively, as $T_0$, $T_1$, $T_2$, $\ldots$, $T_{10}$. Tile $T_0$ is a blank tile, and we refer to the rest collectively as non-blank tiles.

\begin{figure}[h]
  \centering
  \includegraphics{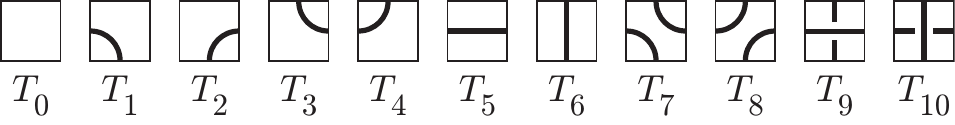}\\
  \caption{Mosaic tiles $T_0$ - $T_{10}$.}
  \label{tiles}
\end{figure}

 A \textit{connection point} of a tile is a midpoint of a tile edge that is also the endpoint of a curve drawn on the tile. A tile is \textit{suitably connected} if each of its connection points touches a connection point of an adjacent tile. An \textit{$n \times n$ knot mosaic}, or \textit{$n$-mosaic}, is an $n \times n$ matrix whose entries are suitably connected mosaic tiles.

\begin{figure}[h]
  \centering
  \includegraphics{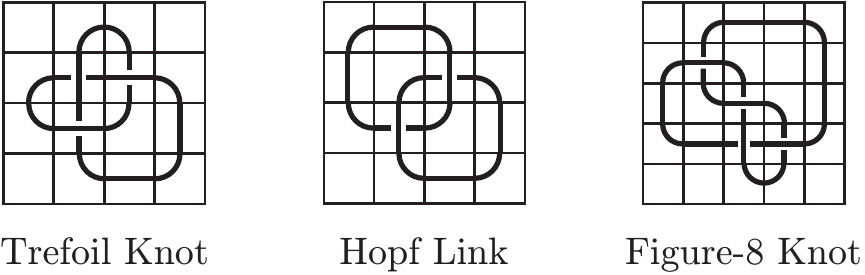}\\
  \caption{Examples of knot mosaics.}
  \label{mosaic-example}
\end{figure}

When listing prime knots with crossing number 10 or less, we will use the Alexander-Briggs notation, matching Rolfsen's table of knots in \emph{Knots and Links} \cite{Rolfsen}. This notation names a knot according to its crossing number with a subscript to denote its order amongst all knots with that crossing number. For example, the $7_4$ knot is the fourth knot with crossing number 7 in Rolfsen's table of knots. For knots with crossing number 11 or higher, we use the Dowker-Thistlethwaite name of the knot. This also names a knot according to its crossing number, with an ``a'' or ``n'' to distinguish the alternating and non-alternating knots and a subscript that denotes the lexicographical ordering of the minimal Dowker-Thistlethwaite notation for the knot. For example \ka{11}{7} is the seventh alternating knot with crossing number 11, and \kn{11}{3} is the third non-alternating knot with crossing number 11. For more details on these and other relevant information on traditional knot theory, we refer the reader to \emph{The Knot Book} by Adams \cite{Adams}.

The \textit{mosaic number} of a knot or link $K$ is the smallest integer $n$ for which $K$ can be represented as an $n$-mosaic. We denote the mosaic number of $K$ as $m(K)$. The mosaic number has previously been determined for every prime knot with crossing number 8 or less. For details, see \textit{Knot Mosaic Tabulations} \cite{Lee2} by Lee, Ludwig, Paat, and Peiffer. In particular, it is known that the unknot has mosaic number 2, the trefoil knot has mosaic number 4, the $4_1$, $5_1$, $5_2$, $6_1$, $6_2$, and $7_4$ knots have mosaic number 5, and all other prime knots with crossing number eight or less have mosaic number 6. In this paper, we determine the rest of the prime knots that have mosaic number 6, which includes prime knots with crossing numbers from 9 up to 13. This confirms, in the case where the mosaic number is $m=6$, a result of Howards and Kobin in \cite{Howards}, where they find that the crossing number is bounded above by $(m-2)^2 -2$ if $m$ is odd, and by $(m-2)^2 -(m-3)$ if $m$ is even. We also determine that not all knots with crossing number 9 (or more) have mosaic number 6.

Another number associated to a knot mosaic is the \emph{tile number of a mosaic}, which is the number of non-blank tiles used to create the mosaic. From this we get an invariant called the \emph{tile number $t(K)$ of a knot or link} $K$, which is the least number of non-blank tiles needed to construct $K$ on a mosaic of any size. In \cite{Heap1}, the authors explore the tile number of a knot or link and determine strict bounds for the tile number of a prime knot $K$ in terms of the mosaic number $m \geq 4$. Specifically, if $m$ is even, then $5m-8 \leq t(K) \leq m^2-4$. If $m$ is odd, then $5m-8 \leq t(K) \leq m^2-8$. It follows immediately that the tile number of the trefoil knot must be 12, and the tile number of the prime knots mentioned above with mosaic number 5 must be 17. The authors also listed several prime knots with mosaic number 6 that have have the smallest possible tile number $t(K)=22$, which we summarize in Theorem \ref{known-tile-numbers}. In this paper, we confirm that this list is complete.

\begin{theorem}[{\cite{Heap1}}]\label{known-tile-numbers} The following knots have the given tile number.
 \begin{enumerate}
  \item Tile number 4: Unknot.
  \item Tile number 12: Trefoil knot.
  \item Tile number 17: $4_1$, $5_1$, $5_2$, $6_1$, $6_2$, and $7_4$.
  \item Tile number 22:
   \begin{enumerate}
    \item $6_3$,
    \item $7_1$, $7_2$, $7_3$, $7_5$, $7_6$, $7_7$,
    \item $8_1$, $8_2$, $8_3$, $8_4$, $8_7$, $8_8$, $8_9$, $8_{13}$,
    \item $9_5$, and $9_{20}$.
   \end{enumerate}
 \end{enumerate}
\end{theorem}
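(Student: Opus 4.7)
The plan is to establish each tile number by proving matching upper and lower bounds. For the lower bounds, I would invoke the general inequality $t(K) \geq 5m(K) - 8$ from \cite{Heap1}, once the mosaic number is known from \cite{Lee2}. Plugging in $m=4$ for the trefoil gives $t \geq 12$; $m=5$ for $4_1, 5_1, 5_2, 6_1, 6_2, 7_4$ gives $t \geq 17$; and $m=6$ for the knots in part (d) gives $t \geq 22$. The unknot case is excluded from the general bound, so I would argue directly: any closed suitably connected loop must turn at least four times, so it uses at least four corner tiles of type $T_5$--$T_8$, hence $t \geq 4$.

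For the upper bounds on the trefoil and the mosaic-number-$5$ knots, I would again appeal to \cite{Heap1}: when $m=4$, the bound $t(K) \leq m^2 - 4 = 12$ matches the lower bound, and when $m=5$, the bound $t(K) \leq m^2 - 8 = 17$ matches. Thus for these knots the tile number is forced without any further construction. For the unknot, the $2$-mosaic consisting of the four corner tiles $T_5, T_6, T_7, T_8$ arranged into a closed loop realizes $t = 4$.

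The real work is the upper bound in part (d). For each of the seventeen knots listed, I would exhibit an explicit suitably connected $6$-mosaic that uses exactly $22$ non-blank tiles. These are the ``space-efficient'' mosaics of the paper's title: since $5(6) - 8 = 22$ is the smallest value permitted by the lower bound, each such mosaic is simultaneously minimal in mosaic number and minimal in tile count. The construction is guided by the arithmetic of the bound: a $6$-mosaic with $22$ non-blank tiles has $14$ blank tiles, and the proof of the lower bound in \cite{Heap1} analyzes precisely how rows and columns of blank tiles must be distributed. I would use that row/column structure as a template, then fill in crossings to realize the desired knot type.

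The main obstacle is verification: once each candidate mosaic is drawn, one must confirm it represents the advertised knot and not some other knot with the same crossing number. I would do this by reading off the Gauss code or DT code from the mosaic and comparing against the standard tables, or equivalently by computing the Jones or Kauffman bracket polynomial and matching it against the known invariant of the target knot. Completeness of the list, i.e.\ the claim that no other knots achieve tile number $22$, is not asserted by this theorem; it is the content of the present paper and is addressed separately.
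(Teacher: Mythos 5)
Your proposal matches the paper's treatment: this theorem is quoted from \cite{Heap1}, and the introduction justifies it exactly as you do---the bounds $5m-8 \leq t(K) \leq m^2-4$ for $m$ even and $5m-8 \leq t(K) \leq m^2-8$ for $m$ odd force the trefoil and mosaic-number-5 cases, while the tile-number-22 knots are realized by explicit $22$-tile $6$-mosaics reproduced in Section \ref{table_of_knots}. The only small caveat is that for $9_5$ and $9_{20}$ the mosaic number is not actually supplied by \cite{Lee2} (which covers crossing number at most 8); instead one deduces $m=6$ from the exhibited $6$-mosaics together with the completeness of the list of knots with mosaic number at most 5, after which your lower-bound argument applies unchanged.
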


Knot mosaics in which the tile number is realized for each of these mosaics are given in \cite{Heap1} and also in this paper in the table of mosaics in Section \ref{table_of_knots}. Finally, in \cite{Heap1}, the authors determine all of the possible layouts for any prime knot on an $n$-mosaic, for $n \leq 6$. In this paper, we complete that work by determining which prime knots can be created from those layouts.

We also point out that throughout this paper we make significant use of the software package KnotScape \cite{Thistle}, created by Thistlethwaite and Hoste, to verify that a given knot mosaic represents a specific knot. Without this program, the authors of this paper would not have been able to complete the work.

\section{Space-Efficient Knot Mosaics}

Two knot mosaic diagrams are of the \emph{same knot type} (or \emph{equivalent}) if we can change one to the other via a sequence of \textit{mosaic planar isotopy moves} that are analogous to the planar isotopy moves for standard knot diagrams. An example of this is shown in Figure \ref{isotopy}. A complete list of all of these moves are given and discussed in \cite{Lom-Kauff} and \cite{Kuriya}. We will make significant use of these moves throughout this paper, as we attempt to reduce the tile number of mosaics in order to construct knot mosaics that use the least number of non-blank tiles.

\begin{figure}[h]
  \centering
  \includegraphics{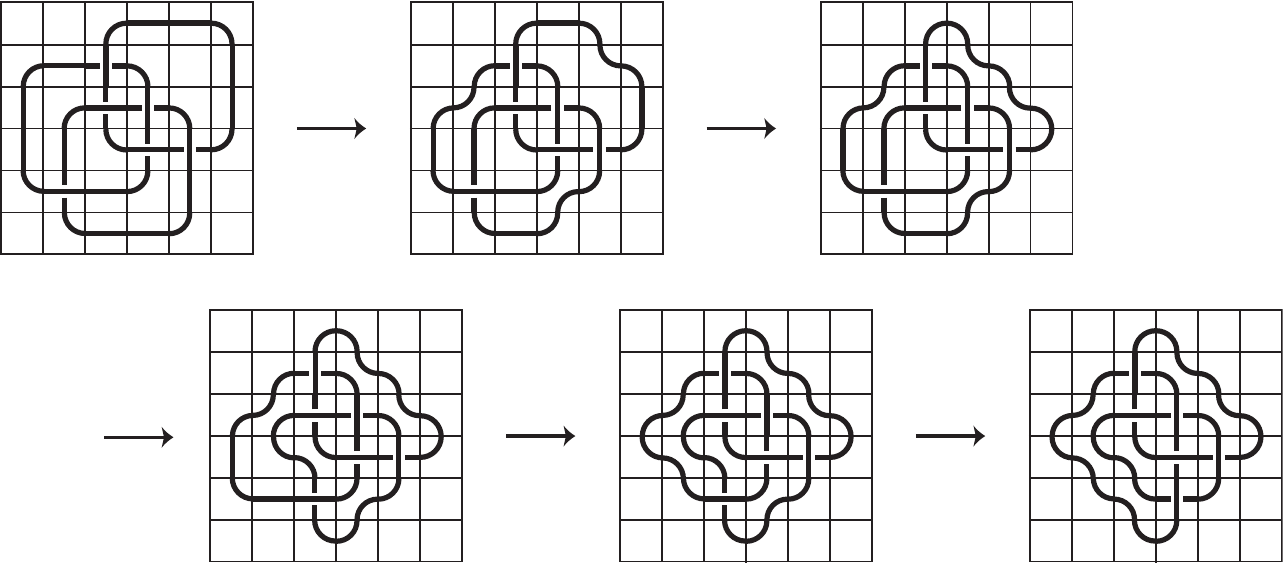}\\
  \caption{Example of mosaic planar isotopy moves.}
  \label{isotopy}
\end{figure}

A knot mosaic is called \textit{minimal} if it is a realization of the mosaic number of the knot depicted on it. That is, if a knot with mosaic number $m$ is depicted on an $m$-mosaic, then it is a minimal knot mosaic. A knot mosaic is called \textit{reduced} if there are no unnecessary, reducible crossings in the knot mosaic diagram. That is, we cannot draw a simple, closed curve on the knot mosaic that intersects the knot diagram transversely at a single crossing but does not intersect the knot diagram at any other point. (See \cite{Adams} for more on reduced knot diagrams.)

We have already defined the tile number of a mosaic and the tile number of a knot or link. A third type of tile number is the \textit{minimal mosaic tile number $t_M(K)$ of a knot or link} $K$, which is the smallest number of non-blank tiles needed to construct $K$ on a minimal mosaic. That is, it is the smallest possible tile number of all possible minimal mosaic diagrams for $K$. That is, for a knot with mosaic number $m$, the minimal mosaic tile number of the knot is the least number of non-blank tiles needed to construct the knot on an $m \times m$ mosaic. Much like the crossing number of a knot cannot always be realized on a minimal mosaic (such as the $6_1$ knot), the tile number of a knot cannot always be realized on a minimal mosaic. Note that the tile number of a knot or link $K$ is certainly less than or equal to the minimal mosaic tile number of $K$, $t(K) \leq t_M(K)$. The fact that the tile number of a knot is not necessarily equal to the minimal mosaic tile number of the knot is confirmed later in Theorem \ref{min-mos-tile-number}. However, for prime knots, it is shown in \cite{Heap1} that $t_M(K)=t(K)$ when $t_M(K) \leq 27$.

A knot $n$-mosaic is \textsl{space-efficient} if it is reduced and the tile number is as small as possible on an $n$-mosaic without changing the knot type of the depicted knot, meaning that the tile number cannot be decreased through a sequence of mosaic planar isotopy moves. A knot mosaic is \textsl{minimally space-efficient} if it is minimal and space-efficient. The first four knot mosaics of the Borromean rings depicted in Figure \ref{isotopy} are not space-efficient because we can decrease the tile number through the depicted mosaic planar isotopy moves. In Figure \ref{space-eff-5-1}, both mosaics are knot mosaic diagrams of the $5_1$ knot. The first knot mosaic is not space-efficient, but the second knot mosaic is minimally space-efficient.

\begin{figure}[h]
  \centering
  \includegraphics{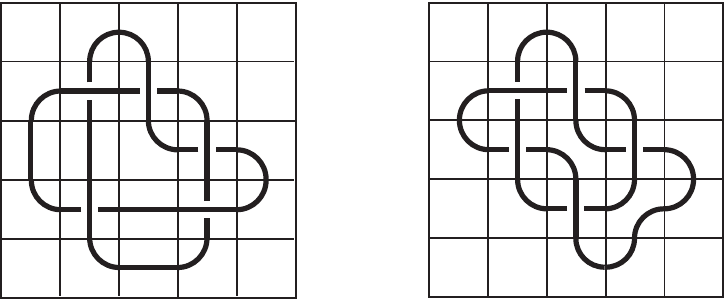}\\
  \caption{Space-inefficient and minimally space-efficient knot mosaics of $5_1$.}
  \label{space-eff-5-1}
\end{figure}

On a minimally space-efficient knot mosaic, the minimal mosaic tile number of the depicted knot must be realized, but the tile number of the knot might not be realized. There may be a larger, non-minimal knot mosaic that uses fewer non-blank tiles, meaning that a space-efficient knot mosaic need not be minimally space-efficient.

In addition to the original eleven tiles $T_0$ - $T_{10}$, we will also make use of \textit{nondeterministic tiles}, such as those in Figure \ref{nondeterministic}, when there are multiple options for the tiles that can be placed in specific tile locations of a mosaic. For example, if a tile location must contain a crossing tile $T_9$ or $T_{10}$ but we have not yet chosen which, we will use the nondeterministic crossing tile. Similarly, if we know that a tile location must have four connection points but we do not know if the tile is a double arc tile ($T_7$ or $T_{8}$) or a crossing tile ($T_9$ or $T_{10}$), we will indicate this with a tile that has four connection points.

\begin{figure}[h]
  \centering
  \includegraphics{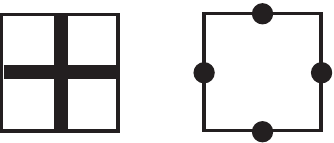}\\
  \caption{Nondeterministic crossing tile and a nondeterministic tile with four connection points.}
  \label{nondeterministic}
\end{figure}

\section{Minimally Space-Efficient 6-Mosaics of Prime Knots}\label{knot-search}

In \cite{Heap1}, the authors provide the possible tile numbers (and the layouts that result in these tile numbers) for all prime knots on a space-efficient 6-mosaic.

\begin{theorem}[{\cite{Heap1}}]\label{tile-numbers} If we have a space-efficient 6-mosaic of a prime knot $K$ for which either every column or every row is occupied, then the only possible values for the tile number of the mosaic are 22, 24, 27, and 32. Furthermore, any such mosaic of $K$ is equivalent (up to symmetry) to one of the mosaics in Figure \ref{tile-numbers-1}.
\begin{figure}[h]
  \centering
  \includegraphics{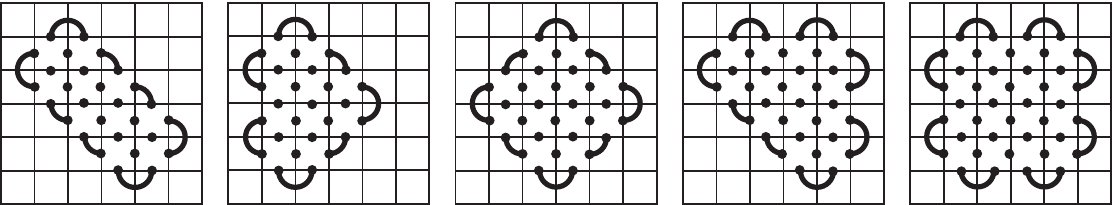}\\
  \caption{Only possible layouts for a space-efficient 6-mosaic.}
  \label{tile-numbers-1}
\end{figure}
\end{theorem}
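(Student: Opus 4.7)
The plan is a finite case analysis of the possible non-blank-tile layouts, organized column-by-column and exploiting the row/column symmetry of the statement so that, without loss of generality, every column contains at least one non-blank tile. The general bounds $5m-8 \le t(K) \le m^2-4$ for even $m$ established in \cite{Heap1} specialize at $m=6$ to $22 \le t(K) \le 32$, which confines the total number of non-blank tiles in any space-efficient 6-mosaic of a prime knot to a narrow window.

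First I would enumerate the admissible column patterns: height-$6$ vertical words over the tile alphabet $\{T_0,T_1,\ldots,T_{10}\}$ whose consecutive entries have matching top/bottom connection points. Reducedness and space-efficiency eliminate many patterns from the outset; for example, a column whose only non-blank tiles are an isolated $T_1/T_4$ arc pair can be slid off by a planar isotopy move and so cannot occur in any space-efficient mosaic. I would then classify the surviving column types by the multisets of their horizontal connection points on the left and right edges, so that the subsequent side-by-side gluing is governed by a small finite table of connection profiles.

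Next I would assemble six compatible columns into a full $6 \times 6$ mosaic, enforcing horizontal connection-point matching, and then test each candidate against the complete catalogue of mosaic planar isotopy moves from \cite{Lom-Kauff} and \cite{Kuriya} to verify space-efficiency. After quotienting by the eight-fold dihedral symmetry of the square and discarding any layout that fails to produce a prime knot diagram, the survivors fall into precisely four tile-count classes: $22$, $24$, $27$, and $32$. Identifying each surviving layout with a representative recovers the mosaics of Figure \ref{tile-numbers-1}.

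The main obstacle is that space-efficiency is a genuinely global condition: an assembled mosaic can admit a multi-step isotopy reducing the tile count even when no single column, or pair of columns, does. Controlling this means treating the isotopy-move catalogue as a finite checklist applied to each complete candidate mosaic, rather than pruning column-locally. The narrow tile-count window and the dihedral-symmetry reduction keep the overall enumeration finite and tractable, but the bookkeeping is substantial; a clean organization by connection-point profiles is essential to avoid duplication and to ensure every admissible layout is accounted for.
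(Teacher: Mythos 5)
This theorem is quoted from \cite{Heap1}; the present paper contains no proof of it, so there is no in-paper argument to measure your proposal against. Compared with the argument actually given in \cite{Heap1}, your route is different in kind: you propose a transfer-matrix style brute force (enumerate admissible height-6 column words, classify by connection-point profile, glue, filter, quotient by symmetry), whereas the cited proof is structural, establishing lemmas about what the boundary rows and columns and the inner board of a space-efficient mosaic of a prime knot must look like (blank corners, two-tile ``caps'' on the outer rows and columns, inner tiles forced to carry four connection points), from which the five layouts and the values $22$, $24$, $27$, $32$ fall out with far less enumeration. Your approach would in principle be verifiable by machine; the structural approach is checkable by hand and explains \emph{why} values such as $23$, $25$, or $28$--$31$ cannot occur.

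The genuine gap is your treatment of space-efficiency, which you identify as the main obstacle but do not actually resolve. Space-efficiency requires that \emph{no sequence} of mosaic planar isotopy moves lowers the tile number; testing each assembled candidate ``against the complete catalogue of moves'' only certifies that no single move (or whatever short sequences you try) does so, and a reducing sequence may first pass through mosaics of equal or larger tile number. A finite per-candidate checklist therefore does not establish the needed global minimality. (One could in principle compute the entire orbit of each candidate under the move set, since there are finitely many $6$-mosaics, but you do not propose this and it is not a hand computation.) The structural lemmas of \cite{Heap1} sidestep this by proving positive lower bounds that hold for \emph{every} mosaic of a prime knot with the stated occupancy property, rather than by certifying irreducibility move by move. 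Relatedly, you invoke the bounds $22 \le t(K) \le 32$ from \cite{Heap1} to ``confine the window,'' but those bounds are obtained there by exactly the structural analysis you would need to reproduce; leaning on them does not shorten an independent proof, and they are not needed for your enumeration in any case, since the enumeration itself would reveal which tile counts occur.
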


In order to determine the tile number or minimal mosaic tile number of prime knots with mosaic number 6, we need to determine which prime knots can be written as a knot mosaic with these layouts. We not only want to know which ones can be expressed, for example, on the layouts with twenty-two non-blank tiles, but we also want to know which ones cannot be expressed in this way. If a knot cannot be expressed on the layout with twenty-two non-blank tiles, then it must have tile number larger than 22. We will do this for each of the layouts in Figure \ref{tile-numbers-1}. To help us with this, we make a few simple observations. All of these are easy to verify, and any rotation or reflection of these scenarios are also valid.

In order to easily refer to specific tile locations within a mosaic, on a $6 \times 6$ mosaic we label all of the boundary tiles except the corner tiles as $B^1$ - $B^{16}$, and we label the sixteen tiles on the inner board as $I^1$ - $I^{16}$.
\begin{figure}[h]
  \centering
  \includegraphics{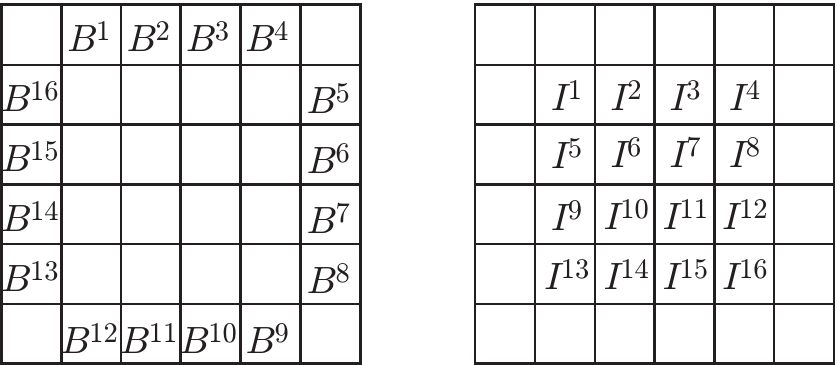}\\
  \caption{Labels of the tiles in a $6 \times 6$ mosaic board.}
  \label{grid-labels}
\end{figure}

Consider the upper, right $3 \times 3$ corner of any space-efficient mosaic of a prime knot with mosaic number 6 and tile number 22, 27, or 32. (That is, we are considering every option except those with tile number 24.) It must be one of the two options in Figure \ref{3x3corners}. All other $3 \times 3$ corners are a rotation of one of these. We will refer to the first option as a \textit{partially filled block} and the second option as a \textit{filled block}.

\begin{figure}[h]
  \centering
  \includegraphics{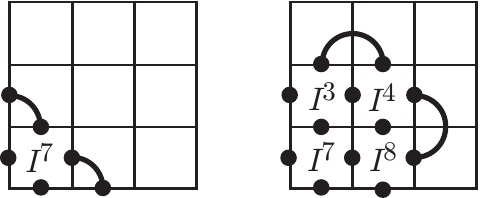}\\
  \caption{A partially filled block and a filled block, respectively.}
  \label{3x3corners}
\end{figure}

\begin{observation} In any space-efficient 6-mosaic of a prime knot, the tile in position $I^7$ of a partially filled block is either a crossing tile or double arc $T_7$.
\end{observation}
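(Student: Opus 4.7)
The plan is to perform a direct case analysis on the tile types that can legally appear at position $I^7$, using the structure of the partially filled block to force a specific set of connection points at $I^7$ and then eliminating each incompatible tile type.

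First, I would read off from Figure \ref{3x3corners} which neighbors of $I^7$ inside the partially filled block are non-blank and record the edges of $I^7$ that must carry connection points. The defining feature of a partially filled block (as opposed to a filled one) is that one or more interior positions of the $3 \times 3$ corner are blank, and this forces $I^7$ to sit at the ``inner corner'' of the occupied region. In particular, the north and east edges of $I^7$ are forced to carry connection points from the tiles of the block above and to the right, while the south and west edges must at least suitably connect to whatever tiles lie outside the block.

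Next, I would eliminate tiles whose connection-point pattern is incompatible with these forced edges. The quarter-arc tiles $T_1$--$T_4$ have connection points on only two adjacent edges, and a quick enumeration shows that none of them can simultaneously supply a north connection and an east connection without leaving a neighboring connection point unmatched or forcing an adjacent tile of the block to be blank, contradicting the partially filled block shown in the figure. The straight tiles $T_5$ and $T_6$ place connection points on opposite edges and so fail for the same reason. This leaves the four four-connection tiles $T_7$, $T_8$, $T_9$, $T_{10}$ as the only candidates on connection grounds.

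Finally, I would rule out $T_8$. Since $T_7$ and $T_8$ have identical connection points, the elimination cannot proceed by connection matching; instead one must trace the strand path through the partially filled block. Placing $T_8$ at $I^7$ joins its two arcs to the adjacent arcs of the corner tile $T_2$ and the other non-blank tiles of the block in such a way that the resulting local configuration either produces a nugatory crossing inside the block or a small closed bigon that can be collapsed through mosaic planar isotopy moves (of the type indicated in Figure \ref{isotopy}) to a mosaic with strictly fewer non-blank tiles. Either outcome contradicts the hypothesis that the mosaic is reduced and space-efficient, so $T_8$ cannot occur. The remaining options are exactly $T_7$, $T_9$, and $T_{10}$, as claimed.

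The main obstacle will be the $T_8$ case, which is the only one not handled by routine connection matching: one must exhibit the explicit reduction (an isotopy simplification or a nugatory crossing) inside the partially filled block, and one must do this in a way that is robust under the symmetries of the block and under any ambiguity about which tiles outside the $3 \times 3$ corner might be present. The remaining eliminations are mechanical and follow from the forced connection pattern at $I^7$ read off directly from Figure \ref{3x3corners}.
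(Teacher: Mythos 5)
Your setup (forcing four connection points at $I^7$ and thereby narrowing the candidates to $T_7$, $T_8$, $T_9$, $T_{10}$) matches the paper's first assertion, though note that the south and west connection points of $I^7$ are not forced by suitable connectivity alone --- they are forced because $I^7$ is the only tile through which the partially filled block can attach to the rest of the diagram, so without them the block would be a split component, contradicting that the mosaic depicts a knot.

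The genuine gap is in your elimination of $T_8$, where you appeal to the wrong hypothesis. You claim that $T_8$ at $I^7$ creates a nugatory crossing or a collapsible bigon, contradicting reducedness and space-efficiency. That is not what goes wrong, and the claimed reduction need not exist: the partially filled block may contain no crossing at all, and a connect-sum diagram can be perfectly reduced and space-efficient. The actual obstruction, which is the one the paper uses, is \emph{primality}. The two connection points of $I^7$ that face into the block (north and east, in the upper-right corner) are the two endpoints of a single arc running around the corner of the block. The double arc $T_8$ is precisely the one whose arc joins those two points to each other; doing so either closes the block's arc into a separate unknotted component (yielding a link with more than one component, which no planar isotopy can remove) or pinches the diagram into a connected sum. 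Either outcome contradicts the hypothesis that the mosaic depicts a prime knot --- not the hypothesis that it is space-efficient. As the paper puts it, ``the only space-efficient mosaics that result from using the double arc $T_8$ are composite knots or links with more than one component,'' i.e., the bad configurations survive the space-efficiency test and must be killed by primality. Your proof as written would not close this case.
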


This is easy to see, as it must be a tile with four connection points, and the only space-efficient mosaics that results from using the double arc $T_8$ are composite knots or links with more than one component. In Figure \ref{observ2}, the first two examples are valid possibilities, but the third one is not.

\begin{figure}[h]
  \centering
  \includegraphics{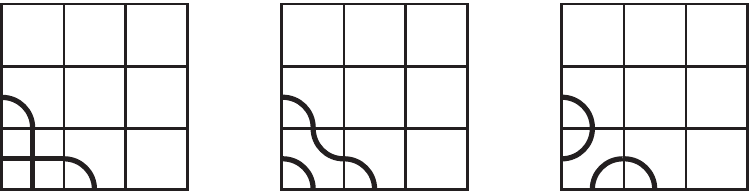}\\
  \caption{The first two examples are the only valid possibilities for a partially filled block, but the third one is not.}
  \label{observ2}
\end{figure}

\begin{observation} In any space-efficient 6-mosaic of a prime knot, there must be at least two crossing tiles in a filled block.
\end{observation}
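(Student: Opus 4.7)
The plan is to argue by contradiction: assume a filled block, without loss of generality in the upper-right corner occupying positions $(1,4)$--$(3,6)$, contains at most one crossing tile, and show that the mosaic cannot be both space-efficient and a prime-knot mosaic. First I would establish the local structural constraints. The corner position $(1,6)$ must hold one of the corner arc tiles $T_1$--$T_4$, with its two connection points on the interior-facing edges. Each of the four edge positions $(1,4)$, $(1,5)$, $(2,6)$, $(3,6)$ borders the mosaic's outer boundary, so each can accommodate only a $2$-connection tile. Only the four interior positions $(2,4)$, $(2,5)$, $(3,4)$, $(3,5)$ can hold the $4$-connection tiles $T_7$--$T_{10}$.

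Next I split into two cases. In the zero-crossing case, all nine tiles are drawn from $T_1$--$T_8$, and the diagram inside the block is a collection of non-crossing planar arcs whose endpoints lie on the six external edges (three along the bottom, three along the left). Any placement of $T_8$ that produces a disjoint closed loop within the block would make the mosaic a link rather than a prime knot, and so can be excluded, in the spirit of the reasoning behind Observation~1. Otherwise the outermost arc, which must include the corner arc at $(1,6)$, can be pulled inward via a sequence of mosaic planar isotopy moves of the type illustrated in Figure~\ref{isotopy}, strictly reducing the number of non-blank tiles in the block and contradicting space-efficiency. In the one-crossing case, the single crossing sits in one of the four interior positions and the remaining eight tiles are non-crossing. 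I would subdivide by the crossing's position: in each subcase, the non-crossing tiles together with the corner arc at $(1,6)$ form a crossing-free sub-diagram to which the same planar-isotopy argument applies, freeing at least one tile and again contradicting space-efficiency.

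The chief obstacle is executing the planar-isotopy reductions concretely. Mosaic planar isotopy is strictly more restrictive than continuous planar isotopy---Reidemeister moves, for instance, are not permitted---so each subcase must exhibit an explicit allowable move that removes at least one tile without altering the knot type. I expect the proof to require an enumerative verification of subcases, much like Observation~1, with the subcase in which the crossing sits at the innermost corner $I^7$ of the block being the most delicate, because that position is adjacent to the greatest number of constrained neighbors and offers the narrowest corridor for a planar-isotopy reduction. A uniform lemma---that any $3\times 3$ corner filled exclusively with non-crossing tiles admits a tile-reducing planar isotopy---would tie the two cases together and streamline the argument.
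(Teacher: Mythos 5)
Your plan is essentially the paper's own argument: the paper likewise splits into the zero-crossing case and the one-crossing case (further subdivided by whether the crossing sits in $I^3$, $I^4$, $I^8$ or in the innermost position $I^7$), discards the configurations that force a multi-component link, and checks that every remaining suitably connected filling reduces by mosaic planar isotopy to a partially filled block with fewer non-blank tiles, contradicting space-efficiency. The only substantive difference is organizational --- the paper shortens the one-crossing enumeration by noting that the $I^3$, $I^4$, $I^8$ configurations are all equivalent via a move that ``rolls'' the crossing between those positions --- and you should note that a second link component need not be a closed loop contained inside the block, so the link-exclusion step must be checked on the whole mosaic rather than locally.
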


\begin{figure}[h]
  \centering
  \includegraphics{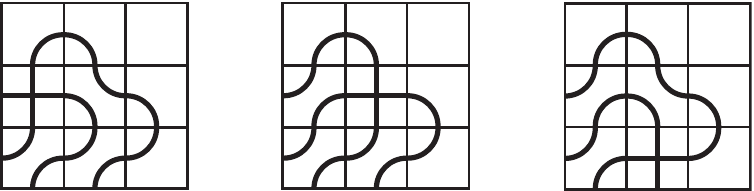}\\
  \caption{Suitably connected filled blocks with one crossing in position $I^{3}$, $I^{4}$, or $I^{8}$. None are space-efficient.}
  \label{observ3}
\end{figure}

If there are no crossing tiles in positions $I^{3}$, $I^{4}$, $I^{7}$, and $I^{8}$ of the mosaic, then the mosaic is not space-efficient or it is a link with more than one component. Each one that is not a link reduces to one of the last two partially filled block options in Figure \ref{observ2}. If there is only one crossing tile and it is in position $I^{3}$, $I^{4}$, or $I^{8}$, then the mosaic is not space-efficient. For each option, if we fill the remaining tile positions with double arc tiles so that the block is suitably connected and we avoid the obvious inefficiencies we get the options shown in Figure \ref{observ3}. They are equivalent to each other via a simple mosaic planar isotopy move that rolls the crossing through each of these positions, and they all reduce to the first partially filled block in Figure \ref{observ2}. If there is only one crossing tile and it is in position $I^{7}$, then the mosaic is also not space-efficient and reduces to either of the first two options in Figure \ref{observ2}.

\begin{observation} In a filled block in any space-efficient 6-mosaic of a prime knot, there are only two distinct possibilities for two crossing tiles, two distinct possibilities for three crossing tiles, and one possibility for four crossing tiles, and they are shown in Figure \ref{observ4}.
\end{observation}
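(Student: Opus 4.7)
The approach is an exhaustive case analysis. In any filled block, the six outer tiles of the $3 \times 3$ corner are essentially forced by the mosaic boundary: the outer corner tile of the mosaic must be an elbow, and the four edge tiles of the block must each be either an elbow or a straight tile, with no choice of tile type affecting the combinatorics of the four interior positions. Consequently, all of the remaining freedom lives in the four interior positions $I^3, I^4, I^7, I^8$ of the filled block. Each of these has four connection points, so each must be one of $T_7, T_8, T_9, T_{10}$; but the argument preceding Observation 2 rules out $T_8$ in any of these positions, since it would produce a closed loop disjoint from the rest of the diagram and hence a multi-component link rather than a prime knot. Thus each of $I^3, I^4, I^7, I^8$ is either a crossing ($T_9$ or $T_{10}$) or the double arc $T_7$.

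The plan is, for each $k \in \{2, 3, 4\}$, to enumerate the $\binom{4}{k}$ ways of choosing which positions carry a crossing, fill the remaining positions with $T_7$ tiles oriented so as to be suitably connected with the forced outer boundary of the block, and then identify configurations equivalent under the natural symmetry of the corner: reflection across the anti-diagonal through the outer corner tile, which fixes $I^4$ and $I^7$ and swaps $I^3 \leftrightarrow I^8$. For each surviving orbit I would check (i) that the $T_7$ orientations can be chosen consistently with the outer tiles of the block, and (ii) that no crossing in the block is reducible, meaning that no sequence of mosaic planar isotopies from Figure \ref{isotopy} pulls a crossing out of the block and decreases the tile number.

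For $k = 4$ there is one placement, which is trivially suitably connected and irreducible, giving the unique four-crossing configuration. For $k = 3$, the four placements split into three orbits under the reflection; one of these orbits is eliminated because the lone double arc makes one of the neighbouring crossings nugatory via precisely the isotopy used to derive Figure \ref{observ3}, leaving two orbits. For $k = 2$, the six placements split into four orbits under the reflection, and two of these orbits contain a crossing in an ``edge'' position flanked only by $T_7$ tiles, which is reducible by the same move that collapses the configurations in Figure \ref{observ3}; the two remaining orbits are exactly the admissible ones.

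The main obstacle is the verification step: for each of the eleven raw placements I must draw the completed $3 \times 3$ block, fix the $T_7$ orientations, and check reducibility by exhausting the relevant local isotopies. Because the analysis is confined to a $3 \times 3$ window and all the moves in question are local, this is a finite, mechanical check, and each admissible orbit matches one of the five pictures displayed in Figure \ref{observ4}.
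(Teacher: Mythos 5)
Your proposal is correct and matches the paper's (largely implicit) argument: the paper offers no written proof of this observation beyond ``easy to verify,'' and the intended verification is exactly your finite check --- reduce to the four interior positions $I^3,I^4,I^7,I^8$, exclude $T_8$, enumerate the $\binom{4}{k}$ crossing placements, quotient by the corner reflection that fixes $I^4,I^7$ and swaps $I^3\leftrightarrow I^8$ (giving $4$, $3$, and $1$ orbits for $k=2,3,4$), and discard the reducible or multi-component configurations to arrive at the $2+2+1=5$ blocks of Figure \ref{observ4}. The only nit is the harmless miscount of the block's boundary tiles (five, not six); the orbit counts and eliminations are right.
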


\begin{figure}[h]
  \centering
  \includegraphics{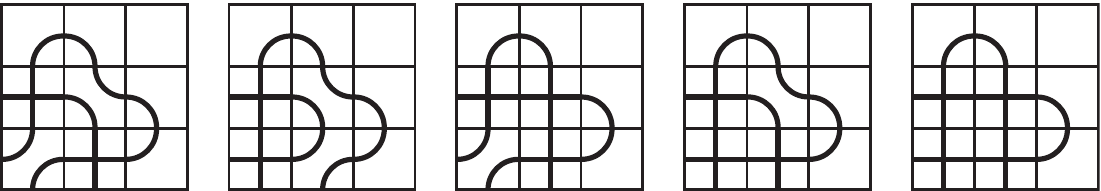}\\
  \caption{The only possible layouts for a filled block.}
  \label{observ4}
\end{figure}

We will refer to the five filled blocks in Figure \ref{observ4} together with the first two partially filled blocks in Figure \ref{observ2} (and reflections and rotations of them) as \textit{building blocks}. The observations provide a way for us to easily build all of the space-efficient $6$-mosaics, as long as the tile number is 22, 27, or 32, but not 24.

\begin{observation}\label{observation4} In any space-efficient 6-mosaic of a prime knot, there is at most one of a filled block with four crossing tiles or a filled block with two crossings in position $I^3$ and $I^7$.
\end{observation}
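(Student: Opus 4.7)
My plan is to prove the observation by a direct case analysis on how two such blocks could be placed among the four corners of the $6$-mosaic. Up to rotational and reflective symmetry of the square, there are essentially two configurations to consider: the two special blocks occupying adjacent corners (sharing a side of the mosaic), and the two special blocks occupying diagonally opposite corners. In each case I would catalogue the strand pattern forced on the inner edges of the two blocks and then attempt to complete the mosaic to a prime, space-efficient diagram.

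The first step is to record the key structural feature shared by the $4$-crossing filled block and the $2$-crossing block with crossings at $I^3$ and $I^7$: in each, the column of inner tiles closest to the center of the mosaic is entirely occupied by crossings, while any arcs in the other inner column are forced to be the non-separating double arc $T_7$ (by Observation 2 applied to reflections). Consequently the strands leaving each such block into the central $6 \times 2$ (or $2 \times 6$) strip enter that strip as a pair of parallel arcs, with the two strands on the bottom (or left) edge of the block being connected through the block to the two strands on the left (or bottom) edge.

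Next, in the diagonally opposite case, I would trace the two pairs of parallel arcs issuing from the two blocks along the central cross of the mosaic. The remaining tiles must be completed suitably, and with only two diagonally opposite $3 \times 3$ corners free of these special blocks, I would show that any such completion either (i) admits a simple planar isotopy sliding a crossing out of the central column/row and thereby reducing the tile number, contradicting space-efficiency, or (ii) produces a closed curve meeting the diagram transversely in two points that separates nontrivial tangles on either side, so the knot is composite and not prime, or (iii) yields more than one component. The adjacent-corner case is handled analogously, but is actually the more constrained of the two: the two special blocks force a single $2 \times 6$ strip between them which carries essentially parallel strands, and the mosaic collapses under an isotopy to one with strictly fewer non-blank tiles.

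The main obstacle is simply the bookkeeping in the case analysis, since each placement admits several completions of the non-block tiles and several choices for crossing polarities. However, the proof is made tractable because both special block types are crossing-heavy (contributing $4$ or more crossings each) and leave very little flexibility along the central tiles; most sub-cases reduce immediately to the space-inefficient configurations already identified in the discussion following Observations~2 and 3, or to an obvious connected-sum decomposition across the central cross.
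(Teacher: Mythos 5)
There is a genuine gap, and it sits exactly at the structural claim your whole case analysis rests on. You assert that in each of the two special blocks ``the two strands on the bottom (or left) edge of the block [are] connected through the block to the two strands on the left (or bottom) edge,'' i.e.\ that the block contains two parallel quarter-circle arcs joining its left side to its bottom side. That is the opposite of what actually happens, and the correct behavior is the entire point of the observation. Trace the filled block with four crossings placed in the upper-right corner (rows 1--3, columns 4--6, corner tile blank): the strand entering at the left edge of $I^3$ passes straight through the crossings $I^3$ and $I^4$, is turned by the boundary arcs in column 6 down and back into $I^8$, and passes straight through $I^8$ and $I^7$ to exit at the left edge of $I^7$. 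So the two left-edge connection points are joined to \emph{each other}, and likewise the two bottom-edge connection points are joined to each other; the same happens in the $I^3$--$I^7$ two-crossing block once the double arcs at $I^4,I^8$ are chosen so as to avoid a closed loop or a reducible kink. Each special block therefore ``caps off'' both of its interior-facing sides. That capping is what forces the conclusion: the four $3\times3$ blocks tile the board (there is no central $6\times2$ strip), each block induces a perfect matching on its four interior connection points, and once two blocks induce the cap matching the eight inter-block connection points split into two nonempty sets that no arc of the diagram can connect, so the diagram has at least two components. This is precisely the paper's (very terse) justification: such configurations are always multi-component links, as illustrated in Figure \ref{observ5-6}.

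Under your stated picture the argument would not close: if the special blocks really routed each left strand to a bottom strand, then two of them placed in opposite corners, together with two ordinary blocks doing the same, can be completed to a single $8$-cycle through all the inter-block connection points --- a one-component diagram --- so neither a split link, nor a connected sum across the central cross, nor a tile-number reduction is forced, and your three-way disjunction (i)/(ii)/(iii) fails in that case. Your adjacent-versus-diagonal case split and the appeal to the earlier observations are fine as scaffolding, but the proof needs the correct connectivity of the blocks (left-to-left and bottom-to-bottom), after which the multi-component conclusion follows uniformly and the compositeness and isotopy-reduction alternatives are not needed at all.
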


It is quite simple to verify that if there is more than one filled block with four crossings or more than one filled block with two crossings in positions $I^3$ and $I^7$, the resulting mosaic must be a link with more than one component. If we use the indicated filled building block with two crossing tiles together with a filled block with four crossing tiles, the resulting mosaic will also be a link with more than one component. Several examples of these are pictured in Figure \ref{observ5-6} with the second link component in each mosaic colored differently from the first link component.

\begin{figure}[h]
  \centering
  \includegraphics{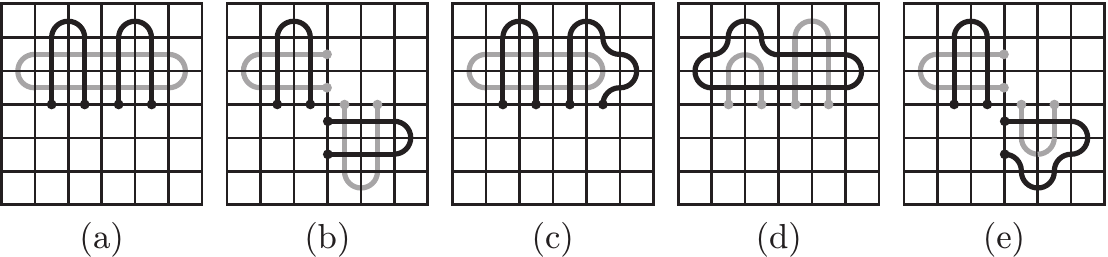}\\
  \caption{These layouts will always be multi-component links.}
  \label{observ5-6}
\end{figure}

We are now ready to determine the tile number of every prime knot with mosaic number 6. In order to do this, we simply compile a list of the prime knots that can fit within each of the layouts given in Figure \ref{tile-numbers-1}. Because we already know the tile number of every prime knot with crossing number 7 or less, we can restrict our search to knots with crossing number 8 or more. The process is simple, and the above observations help us tremendously. If the tile number is 22, 27, or 32, we use the building blocks. In the case of the mosaics with tile number 24, we look at all possible placements, up to symmetry, of eight or more crossing tiles within the mosaics and fill the remaining tile positions with double arc tiles so as to avoid composite knots and non-reduced knots. Once the mosaics are completed, we then eliminate any links, any duplicate layouts that are equivalent to others via obvious mosaic planar isotopy moves, and any mosaics for which the tile number can easily be reduced by a simple mosaic planar isotopy move. Finally, we use KnotScape to determine what knots are depicted in the mosaic be choosing the crossings so that they are alternating, as well as all possible non-alternating combinations. We provide minimally space-efficient knot mosaics for every prime knot with mosaic number less than or equal to 6 in the table of knots in Section \ref{table_of_knots}.

We have already listed several prime knots with tile number 22 in Theorem \ref{known-tile-numbers}. This next theorem asserts that the list is complete.

\begin{theorem}\label{t=22}
The only prime knots $K$ with %mosaic number $m(K)=6$ and
tile number $t(K)=22$ are:
\begin{enumerate}
  \item $6_{3}$,
  \item $7_1$, $7_2$, $7_3$, $7_5$, $7_6$, $7_7$,
  \item $8_1$, $8_2$, $8_3$, $8_4$, $8_7$, $8_8$, $8_9$, $8_{13}$,
  \item $9_5$, and $9_{20}$.
\end{enumerate}
\end{theorem}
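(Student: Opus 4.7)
The strategy is a finite case enumeration enabled by the structural results just developed. By Theorem \ref{tile-numbers}, any space-efficient 6-mosaic of a prime knot $K$ with every row or every column occupied has tile number in $\{22, 24, 27, 32\}$, and the mosaic is (up to symmetry) one of those shown in Figure \ref{tile-numbers-1}. Since Theorem \ref{known-tile-numbers} already verifies that every knot on the list has tile number $22$, the content of Theorem \ref{t=22} is the converse: no prime knot outside the list admits a space-efficient 6-mosaic with exactly $22$ non-blank tiles. The knots to enumerate are therefore precisely those with mosaic number $6$ realized on one of the 22-tile layouts, since knots with mosaic number at most $5$ have tile number at most $17$.

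The plan is to isolate the 22-tile layouts from Figure \ref{tile-numbers-1} and, on each, build every legal tile assignment using the eight building blocks identified in the observations. For each $3\times 3$ corner I place either one of the two partially filled blocks from Figure \ref{observ2} or one of the five filled blocks from Figure \ref{observ4}, consistent with suitable connection across shared block boundaries and with the counts forced by the layout. Observation \ref{observation4} prunes a large number of candidates: at most one filled block can be the four-crossing block or the two-crossings-at-$I^3$-and-$I^7$ block, so after imposing this rule only a modest list of configurations survives. In each surviving mosaic I then resolve every nondeterministic crossing tile by independently choosing $T_9$ or $T_{10}$, and I discard any mosaic whose underlying diagram is a multi-component link (detected by component-coloring as in Figure \ref{observ5-6}), a composite knot, or a diagram that is reducible or whose tile number can be lowered by a mosaic planar isotopy. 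Each remaining mosaic is fed to KnotScape to identify the knot type.

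Comparing the resulting set of knot types to the list in the theorem statement completes the argument. The main obstacle is organizational rather than conceptual: the enumeration must be laid out so that no building-block assignment or crossing resolution is missed, while symmetries and obvious isotopy equivalences are factored out to keep the count manageable. The building-block observations reduce each layout to a bounded number of cases, Observation \ref{observation4} eliminates the combinatorially worst interactions between heavy blocks, and the remaining check that every non-alternating crossing choice is recorded is a careful bookkeeping step; the knot identification itself is then handled mechanically. Explicit minimally space-efficient mosaics for each knot in the list are exhibited in the table of knots in Section \ref{table_of_knots}, so the realization direction is witnessed diagrammatically, and the enumeration above supplies the negative half.
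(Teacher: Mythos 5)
Your proposal follows essentially the same route as the paper: construct the two 22-tile layouts from Figure \ref{tile-numbers-1} out of the building blocks, prune by the observations (with Observation \ref{observation4} capping the crossing count at nine), discard links and isotopy-equivalent duplicates, resolve all alternating and non-alternating crossing choices, and identify the results with KnotScape, with the realization direction witnessed by the explicit mosaics in the table. The only cosmetic discrepancy is your count of ``eight'' building blocks where the paper identifies seven (five filled plus two partially filled); this does not affect the argument.
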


In order to obtain the minimally space-efficient knot mosaic for $7_3$, we had to use eight crossings. None of the possible minimally space-efficient knot mosaics with twenty-two non-blank tiles and exactly seven crossings produced $7_3$. The fewest number of non-blank tiles needed to represent $7_3$ with only seven crossings is twenty-four, and one such mosaic is given in Figure \ref{7_3-7and8}, along with a minimally space-efficient mosaic of $7_3$ with eight crossings. In summary, on a minimally space-efficient knot mosaic, for the tile number (or minimal mosaic tile number) to be realized, it might not be possible for the crossing number to be realized. This is also the case with $8_1$, $8_3$, $8_7$, $8_8$, and $8_9$, as nine crossing tiles are required to represent these knots on a mosaic with tile number 22.

\begin{figure}[h]
  \centering
  \includegraphics{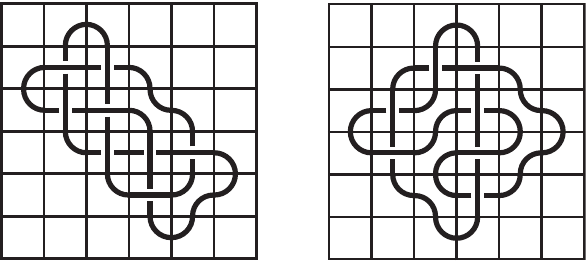}\\
  \caption{The $7_3$ knot as a minimally space-efficient knot mosaic with eight crossing tiles and as a knot mosaic with seven crossing tiles.}
  \label{7_3-7and8}
\end{figure}

\begin{proof} We simply build the first two tile configurations (both with twenty-two non-blank tiles) in Figure \ref{tile-numbers-1} using the $3 \times 3$ building blocks, eliminate any that do not satisfy the observations, choose specific crossing types, and see what we get. Whatever prime knots with eight or more crossings are missing are the ones we know cannot have tile number 22.

We begin with the first mosaic layout given in Figure \ref{tile-numbers-1}. Up to symmetry, there are only six possible configurations of this layout with eight crossings, and they are given in Figure \ref{22tiles-8crossings-1}. Notice that some of these are links that can be eliminated, including Figures \ref{22tiles-8crossings-1}(d) and (f). Furthermore, Figures \ref{22tiles-8crossings-1}(b) and (c) are equivalent to each other via a mosaic planar isotopy move that shifts one of the crossing tiles to a diagonally adjacent tile position. This leaves us with only three possible distinct configurations of eight crossings from this first layout, Figures \ref{22tiles-8crossings-1}(a), (b), and (e).

\begin{figure}[h]
  \centering
  \includegraphics{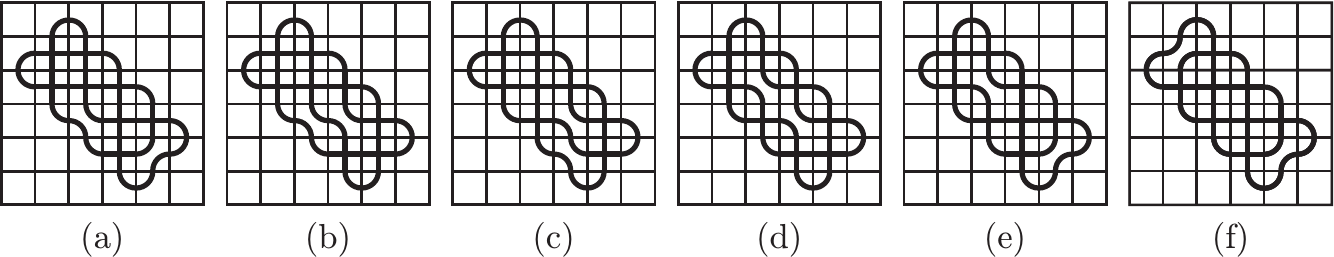}\\
  \caption{Possible placements of eight crossing tiles in the first layout with tile number 22.}
  \label{22tiles-8crossings-1}
\end{figure}

Now we do the same thing with the second mosaic layout given in Figure \ref{tile-numbers-1} with twenty-two non-blank tiles. Up to symmetry, there are six possible configurations of this layout with eight crossings, and they are given in Figure \ref{22tiles-8crossings-2}. Again, Figures \ref{22tiles-8crossings-2}(d) and (f) are links, and Figures \ref{22tiles-8crossings-2}(b) and (c) are equivalent to each other. This leaves us again with only three possible configurations of eight crossings from this second layout, and they are again Figures \ref{22tiles-8crossings-2}(a), (b), and (e). Moreover, each one of these is equivalent to the corresponding mosaics in Figure \ref{22tiles-8crossings-1} via a few mosaic planar isotopy moves that shift the crossings in the lower, left building block into the lower, right building block of the mosaic.

\begin{figure}[h]
  \centering
  \includegraphics{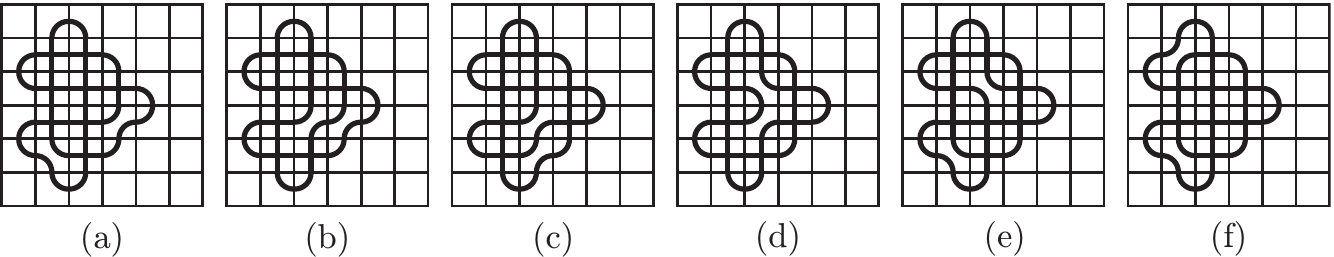}\\
  \caption{Possible placements of eight crossing tiles in the second layout with tile number 22.}
  \label{22tiles-8crossings-2}
\end{figure}

This leaves us with only three distinct possible layouts for a minimally space-efficient $6 \times 6$ mosaic with eight crossings and tile number 22. If we choose crossings for the configuration in Figure \ref{22tiles-8crossings-1}(a) so that they are alternating, we get the $8_{13}$ knot. If we choose crossings for the configuration in Figure \ref{22tiles-8crossings-1}(b) so that they are alternating, we get the $8_{4}$ knot. Finally, if we choose crossings for the configuration in Figure \ref{22tiles-8crossings-1}(e) so that they are alternating, we get the $8_{2}$ knot. If we examine all possible non-alternating choices for each one, all of the resulting knots have crossing number seven or less. (The minimally space-efficient knot mosaic for $7_3$ must have eight crossing tiles and can be obtained by a choice of non-alternating crossings within any of the three distinct possible layouts in Figure \ref{22tiles-8crossings-1}.)

Now we go through the same process using nine crossing tiles. Up to symmetry, there are only four possible configurations of these layouts with nine crossings, and they are given in Figure \ref{22tiles-9or10crossings-1}. The mosaic in Figure \ref{22tiles-9or10crossings-1}(c) is equivalent to the mosaic in Figure \ref{22tiles-9or10crossings-1}(b) via a few mosaic planar isotopy moves that shifts the crossings in the lower, left building block into the lower, right building block of the mosaic. This leaves us with only three possible configurations of nine crossing tiles.

\begin{figure}[h]
  \centering
  \includegraphics{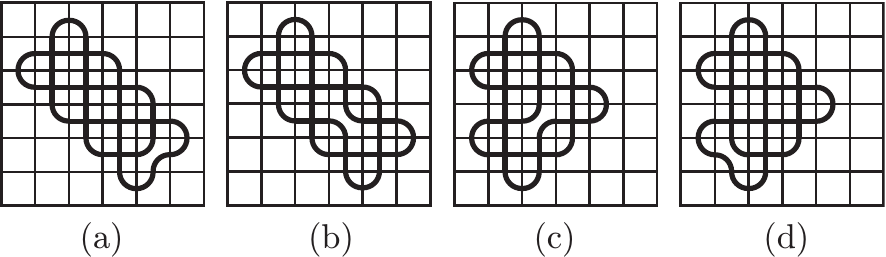}\\
  \caption{Possible placements of nine crossings with tile number 22.}
  \label{22tiles-9or10crossings-1}
\end{figure}

If we choose crossings for the configuration in Figure \ref{22tiles-9or10crossings-1}(a) so that they are alternating, we get the $9_{20}$ knot. If we examine all possible non-alternating choices for the crossings, most of the resulting knots have crossing number seven or less, but we do get some additions to our list of prime knots with tile number 22 and crossing number 8. In particular, we get $8_7$, $8_8$, and $8_9$. (We also get $8_4$, which was previously obtained with only eight crossings.) If we choose crossings for the configuration in Figure \ref{22tiles-9or10crossings-1}(b) so that they are alternating, we get the $9_{5}$ knot. Again, if we examine the possible non-alternating choices for the crossings, we get two additional prime knots with tile number 22 and crossing number 8, and they are $8_1$ and $8_3$. Finally, if we choose crossings for the configuration in Figure \ref{22tiles-9or10crossings-1}(d), we get the exact same knots as we did for Figure \ref{22tiles-9or10crossings-1}(a).

By Observation \ref{observation4}, we cannot place more than nine crossing tiles on any mosaic with twenty-two non-blank tiles. We have now found every possible prime knot with tile number 22 and eight or more crossings, and they are exactly those listed in the theorem. All other prime knots with crossing number at least eight must have tile number larger than 22.
\end{proof}

We now know precisely which prime knots have tile number 22 or less. Our next goal is to determine which prime knots have tile number 24.

\begin{theorem}\label{t=24}
The only prime knots $K$ with %mosaic number $m(K)=6$ and
tile number $t(K)=24$ are:
\begin{enumerate}
  \item $8_5$, $8_6$, $8_{10}$, $8_{11}$, $8_{12}$, $8_{14}$, $8_{16}$, $8_{17}$, $8_{18}$, $8_{19}$, $8_{20}$, $8_{21}$,
  \item $9_8$, $9_{11}$, $9_{12}$, $9_{14}$, $9_{17}$, $9_{19}$, $9_{21}$, $9_{23}$, $9_{26}$, $9_{27}$, $9_{31}$,
  \item $10_{41}$, $10_{44}$, $10_{85}$, $10_{100}$, $10_{116}$, $10_{124}$, $10_{125}$, $10_{126}$, $10_{127}$, $10_{141}$, $10_{143}$, $10_{148}$, $10_{155}$ and $10_{159}$.
\end{enumerate}
\end{theorem}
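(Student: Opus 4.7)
The plan is to mirror the strategy used for Theorem~\ref{t=22}, but applied to the layouts in Figure~\ref{tile-numbers-1} that have twenty-four non-blank tiles. The central task is an exhaustive enumeration: I would place every possible configuration of crossing tiles (up to the rotational/reflective symmetries of the mosaic and the swap of the two mirror crossing types), complete each configuration with the forced double-arc tiles so that the result is suitably connected, and then feed every alternating and non-alternating sign choice into KnotScape. Any prime knot with crossing number at least $8$ that does not appear on the resulting list must have tile number strictly greater than $24$, since knots with crossing number $\leq 7$ are already known to have tile number $\leq 17$ by Theorem~\ref{known-tile-numbers}.

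First I would identify the $24$-tile layouts in Figure~\ref{tile-numbers-1} and fix a convenient set of boundary/inner labels as in Figure~\ref{grid-labels}. Because the building-block machinery of Observations~1--4 applies only when the tile number is $22$, $27$, or $32$, the enumeration here must be done at the level of individual tiles rather than $3\times 3$ blocks. The range of crossing counts to consider is bounded below by $8$ (crossing number~$7$ or less is already handled) and above by the Howards--Kobin bound $(m-2)^2-(m-3)=13$ for $m=6$, and also by the geometric constraint that each $24$-tile layout simply has too few four-connection-point slots to support more than a certain small number of crossings. So I would loop over $k = 8, 9, \ldots$ up to this cap, listing all symmetry-inequivalent placements of $k$ crossings within the available inner and boundary positions that admit four connection points.

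For each crossing placement, I would fill the remaining tile positions with double arcs $T_7$ or $T_8$ dictated by the suitable-connection requirement, discard configurations that produce multi-component links (easy to spot by component tracing), composite knots (detectable by a separating simple closed curve as in the observations preceding Theorem~\ref{t=22}), or obviously reducible diagrams whose tile number drops under a planar isotopy move. Duplicate configurations related by the shifting moves noted in the proof of Theorem~\ref{t=22} (such as sliding a crossing into a diagonally adjacent position, or translating crossings between adjacent $3\times 3$ regions) are then consolidated. For each surviving configuration I would run KnotScape on the alternating assignment and every non-alternating sign assignment, collecting all prime knots that appear and subtracting those already on the Theorem~\ref{t=22} list (as well as the mosaic-number-$\leq 5$ knots). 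The claim is that what remains is exactly the three-part list in the statement.

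The main obstacle will be sheer combinatorial bookkeeping: the $24$-tile layouts offer more crossing positions and fewer forced structural constraints than the $22$-tile layouts, so the pre-symmetry search space is considerably larger, and the verification that certain target knots (such as the various $10$-crossing knots appearing in part~(c)) genuinely require twenty-four tiles while other nearby knots of the same crossing number do not, rests entirely on the completeness of this enumeration. Care will also be required with non-alternating sign assignments, since many of them collapse to knots of strictly smaller crossing number and must be re-identified via KnotScape rather than read off from the diagram; the fact, already seen for $7_3$, $8_1$, $8_3$, $8_7$, $8_8$, $8_9$, that a knot may need more crossing tiles than its crossing number in a minimally space-efficient mosaic means that a knot appearing in the final list need not arise only from the ``natural'' alternating choice on a diagram with that many crossings.
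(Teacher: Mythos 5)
Your proposal matches the paper's proof in essentially every respect: the paper likewise abandons the building-block machinery for the single 24-tile layout, enumerates all symmetry-inequivalent placements of $8$, $9$, and $10$ crossing tiles, fills in double arcs, discards links, composites, and isotopy-equivalent duplicates, and identifies the resulting knots (alternating and non-alternating) with KnotScape, terminating because $11$ or $12$ crossing tiles always yield multi-component links. The only quibble is that your Howards--Kobin cap bounds the crossing number rather than the number of crossing tiles (which can exceed it), but your fallback geometric argument is the one the paper actually uses.
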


 We will show that $8_6$ must have nine crossing tiles to fit on a mosaic with tile number 24. None of the possible minimally space-efficient knot mosaics with exactly eight crossings produce these knots. Similarly, the minimally space-efficient mosaics for $9_{12}$, $9_{19}$, $9_{21}$, and $9_{26}$ require ten crossings.

\begin{proof}  We search for all of the prime knots that have  tile number 24. In this particular case, the observations at the beginning of this section do not apply, meaning we cannot use the building blocks as we did in the proof of Theorem \ref{t=22}. We know from Theorem \ref{tile-numbers} that any prime knot with tile number 24 has a space-efficient mosaic like the third layout in Figure \ref{tile-numbers-1}. We simply look at all possible placements of eight or more crossings within that layout, choose the type of each crossing, and keep track of the resulting prime knots.

First, we look at all possible placements, up to symmetry, of eight crossings within the mosaic and, we fill the remaining tile positions with double arc tiles so as to avoid composite knots and unnecessary loops. After eliminating any links and any duplicate layouts that are equivalent to others via simple mosaic planar isotopy moves, we get seventeen possible layouts, which are shown in Figure \ref{24tiles-8crossings-1}. Not all of these will result in distinct knots, and in most cases it is not difficult to see that they will result in the same knot. However, we include all of them here because they differ by more than just simple symmetries or simple mosaic planar isotopy moves.

\begin{figure}[h]
  \centering
  \includegraphics{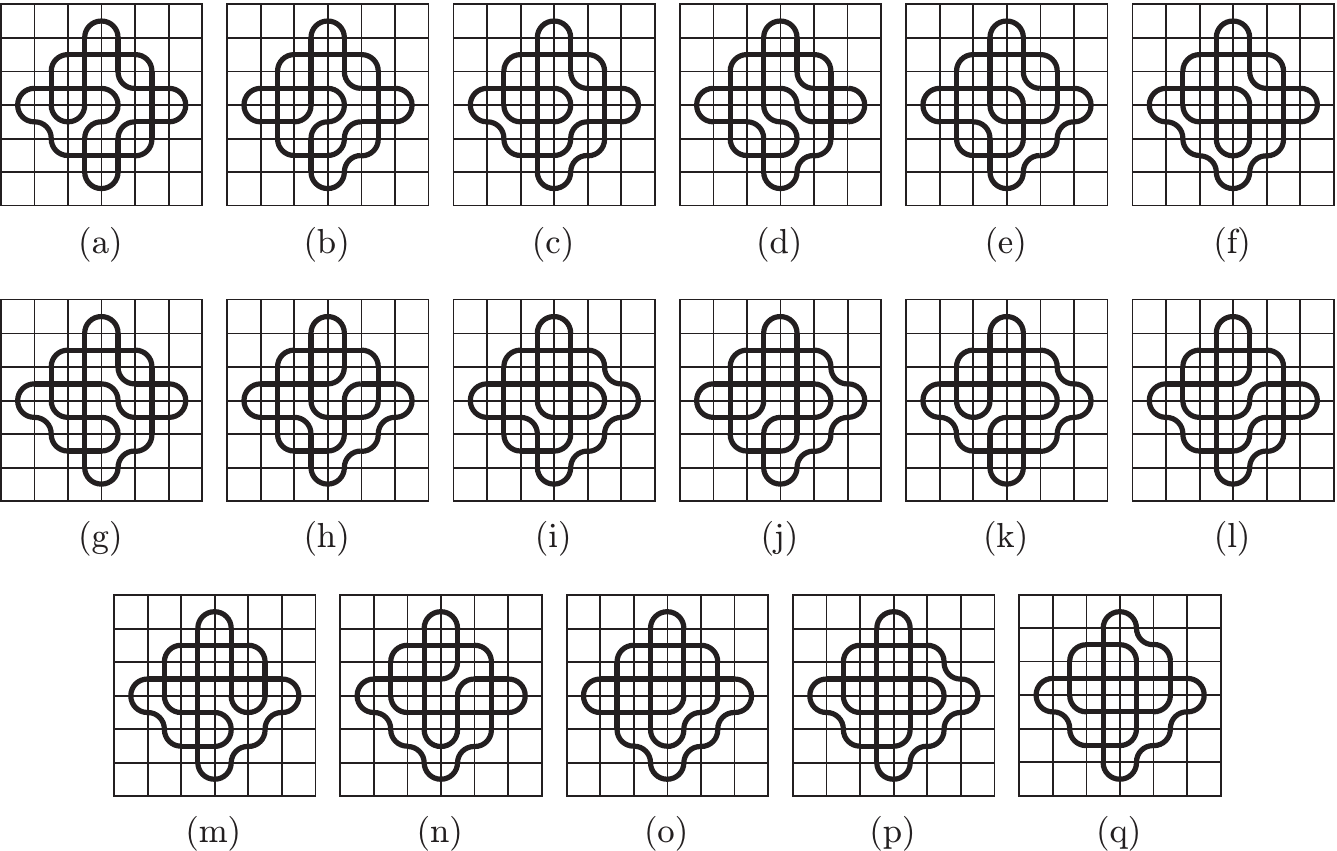}\\
  \caption{Only possible layouts, after elimination, with eight crossing tiles for a prime knot with  tile number 24.}
  \label{24tiles-8crossings-1}
\end{figure}

Choosing specific crossings so that the knots are alternating, we obtain only fourteen distinct knots as follows. Figure \ref{24tiles-8crossings-1}(a) is the $8_{1}$ knot. Figures \ref{24tiles-8crossings-1}(b) and (c) are $8_{2}$. Figure \ref{24tiles-8crossings-1}(d) is $8_{4}$. Figure \ref{24tiles-8crossings-1}(e) is $8_{5}$. Figures \ref{24tiles-8crossings-1}(f) and (g) are $8_{7}$. Figure \ref{24tiles-8crossings-1}(h) is $8_{8}$. Figure \ref{24tiles-8crossings-1}(i) is $8_{10}$. Figure \ref{24tiles-8crossings-1}(j) is $8_{11}$. Figure \ref{24tiles-8crossings-1}(k) is $8_{12}$. Figure \ref{24tiles-8crossings-1}(l) is $8_{13}$. Figures \ref{24tiles-8crossings-1}(m) and (n) are $8_{14}$. Figure \ref{24tiles-8crossings-1}(o) is $8_{16}$. Figure \ref{24tiles-8crossings-1}(p) is $8_{17}$. Figure \ref{24tiles-8crossings-1}(q) is $8_{18}$. Not all of these have  tile number 24. We already know $8_{1}$, $8_{2}$, $8_{4}$, $8_{7}$, $8_{8}$, and $8_{13}$ have  tile number 22. Each of the others have  tile number 24. The non-alternating knots $8_{19}$, $8_{20}$, and $8_{21}$ are obtained from choosing non-alternating crossings in a few of these. Those pictured in the table of knots come from the layout in Figure \ref{24tiles-8crossings-1}(p). Mosaics for all of these are given in the table of knots in Section \ref{table_of_knots}. The only knots with crossing number 8 that we have not yet found are $8_{6}$ and $8_{15}$, and now we know that they cannot be represented with eight crossings and twenty-four non-blank tiles.

We now turn our attention to mosaics with nine crossings. Just as before, we look at all possible placements, up to symmetry, of nine crossings, eliminate any composite knots, unnecessary loops, links and any duplicate layouts that are equivalent to others via simple mosaic planar isotopy moves. In the end, we get seven possible layouts, which are shown in Figure \ref{24tiles-9crossings-1}. Choosing specific crossings for each layout, in order, so that the knots are alternating, we obtain the seven knots $9_8$, $9_{11}$, $9_{14}$, $9_{17}$, $9_{23}$, $9_{27}$, and $9_{31}$, all of which have  tile number 24. If we look at all possible choices for non-alternating crossings, the only knot with  tile number 24 that arises but did not show up with only eight crossing tiles is the $8_6$ knot, whose knot mosaic in the table of knots comes from the layout in Figure \ref{24tiles-9crossings-1}(a). All other prime knots that arise using non-alternating crossings have been exhibited as a minimally space-efficient mosaic with fewer crossings or fewer non-blank tiles.

\begin{figure}[h]
  \centering
  \includegraphics{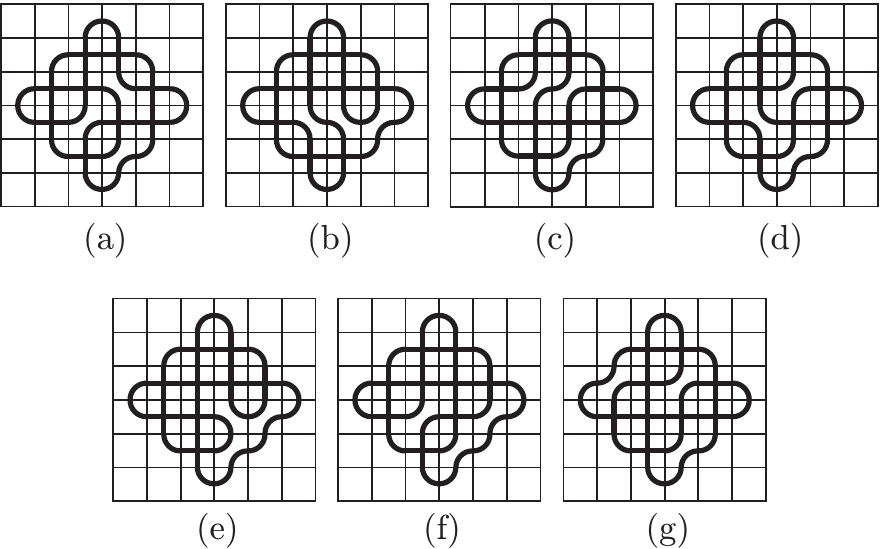}\\
  \caption{Only possible layouts, after elimination, with nine crossing tiles for a prime knot with  tile number 24.}
  \label{24tiles-9crossings-1}
\end{figure}

Now we do the same for ten crossings. Again, we observe all possible placements of ten crossings on the third mosaic in Figure \ref{tile-numbers-1}, and after eliminating any links and duplicate layouts up to reflection, rotation, or equivalencies via simple mosaic planar isotopy moves, we end up with five possible layouts, shown in Figure \ref{24tiles-10crossings-1}.

We begin with Figure \ref{24tiles-10crossings-1}(a). Choosing specific crossings so that the knot is alternating, we obtain the $10_{116}$ knot. If we look at all possible choices for non-alternating crossings, the only prime knots that we get with  tile number 24 are the non-alternating knots $10_{124}$, $10_{125}$, $10_{141}$, $10_{143}$, $10_{155}$, and $10_{159}$. We do the same with Figure \ref{24tiles-10crossings-1}(b) and get the alternating knot $10_{100}$. For the non-alternating choices, we get almost all of the same ones we just obtained, but we do not get any new additions to our list of knots. For Figure \ref{24tiles-10crossings-1}(c), with alternating crossings we get $10_{41}$, and with non-alternating crossings we get $9_{19}$ and $9_{21}$ as the only new additions to our list. Neither of these came from considering only nine crossings. Now we observe the mosaic in Figure \ref{24tiles-10crossings-1}(d). By alternating the crossings, we obtain $10_{44}$, and by using non-alternating crossings, the only new additions to our list are $9_{12}$ and $9_{26}$. Finally, we end with Figure \ref{24tiles-10crossings-1}(e). Assigning alternating crossings, we get $10_{85}$, and assigning non-alternating crossings, we get $10_{126}$, $10_{127}$, and $10_{148}$.

\begin{figure}[h]
  \centering
  \includegraphics{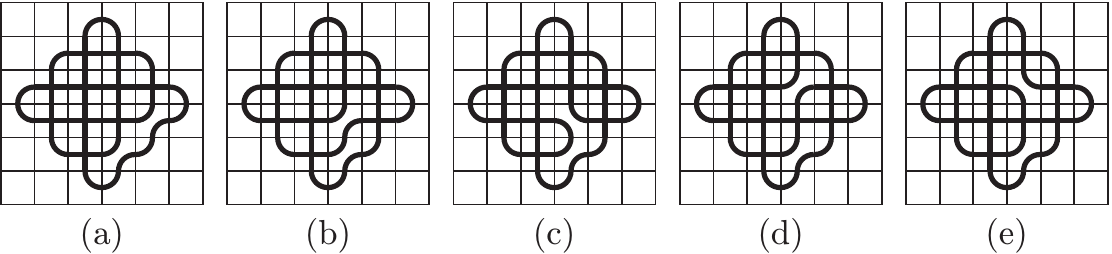}\\
  \caption{Only possible layouts, after elimination, with ten crossing tiles for a prime knot with  tile number 24.}
  \label{24tiles-10crossings-1}
\end{figure}

Finally, we can place eleven or twelve crossing tiles into the layout with twenty-four non-blank tiles, but the space-efficient results will always be a link with more than one component. Therefore, no minimally space-efficient prime knot mosaics arise from this consideration. We have considered every possible placement of crossing tiles on the third layout in Figure \ref{tile-numbers-1} and have found every possible prime knot with  tile number 24 and eight or more crossings, and they are exactly those listed in the theorem. Minimally space-efficient mosaics for all of these knots are given in the table of knots in Section \ref{table_of_knots}. All other prime knots with crossing number at least eight must have  tile number larger than 24.
\end{proof}

We now know precisely which prime knots have  tile number less than or equal to 24, and we are ready to determine which prime knots with mosaic number 6 have  tile number 27. We see our first occurrence of knots with crossing number larger than 10, and we use the Dowker-Thistlethwaite name of the knot.

\begin{theorem}\label{t=27}
The only prime knots $K$ with mosaic number $m(K)=6$, tile number $t(K)=27$, and minimal mosaic tile number $t_M(K)=27$ are:
\begin{enumerate}
  \item $8_{15}$
  \item $9_{1}$, $9_{2}$, $9_{3}$, $9_{4}$, $9_{7}$, $9_{9}$, $9_{13}$, $9_{24}$, $9_{28}$, $9_{37}$, $9_{46}$, $9_{48}$,
  \item $10_{1}$, $10_{2}$, $10_{3}$, $10_{4}$, $10_{12}$, $10_{22}$, $10_{28}$, $10_{34}$, $10_{63}$, $10_{65}$, $10_{66}$, $10_{75}$, $10_{78}$, $10_{140}$, $10_{142}$, $10_{144}$,
  \item \ka{11}{107}, \ka{11}{140}, and \ka{11}{343}.
\end{enumerate}
\end{theorem}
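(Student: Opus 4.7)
The plan is to follow the same strategy used in the proofs of Theorems \ref{t=22} and \ref{t=24}. Because the tile number in question is $27$ rather than $24$, Observations 1--4 all apply, so every $3 \times 3$ corner of a candidate space-efficient mosaic must be one of the \emph{building blocks}: the two admissible partially filled blocks of Figure \ref{observ2} together with the five filled blocks of Figure \ref{observ4}. The task thus reduces to a finite case analysis: locate each layout in Figure \ref{tile-numbers-1} whose tile count is $27$, assemble its corners from the building blocks in all admissible ways, and identify the resulting knots.

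First I would isolate the $27$-tile layout(s) from Figure \ref{tile-numbers-1} and decompose each one into its four corner regions together with the tiles connecting them. For each corner I would list the compatible building blocks (respecting the connection points forced by the surrounding tiles), and then form all global combinations. Observation \ref{observation4} trims this list significantly, since at most one filled corner can be of the four-crossing type or of the two-crossings-in-$I^3$-and-$I^7$ type, and certain pairings of filled blocks are forbidden on the grounds that they force a multi-component link. Next I would eliminate configurations related by a mosaic symmetry (reflection or rotation) or by the kind of planar isotopy used in the proofs of Theorems \ref{t=22} and \ref{t=24}, which shifts a crossing from one building block into a diagonally adjacent one. This should leave a short list of genuinely distinct configurations, stratified by the number of crossing tiles $k$, with $k$ ranging from $8$ up to the maximum compatible with Observation \ref{observation4}.

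For each of these configurations I would then choose crossing types. An alternating assignment typically produces one of the alternating knots $9_1,\ldots,9_{48}$, $10_1,\ldots,10_{78}$, or \ka{11}{107}, \ka{11}{140}, \ka{11}{343} named in the statement; each non-alternating assignment is fed into KnotScape. Any prime knot with $8$ or more crossings that emerges but does not already appear in Theorems \ref{known-tile-numbers}, \ref{t=22}, or \ref{t=24} is recorded as a new knot with tile number exactly $27$. The knot $8_{15}$, which was left out by the $22$- and $24$-tile analyses, should appear here, as should knots like $9_{24}$ or $10_{63}$ that require a layout of this size. A representative minimally space-efficient mosaic for every knot on the final list then gets placed in the table of Section \ref{table_of_knots}.

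The main obstacle I anticipate is managing the combinatorial explosion: with more filled corners than in the $22$- or $24$-tile case, the number of compatible block combinations and the number of non-alternating crossing assignments grow substantially, and the symmetry-and-isotopy reductions become correspondingly more delicate. The identifications at the top of the crossing range, particularly the three $11$-crossing alternating knots, require special care, since a single layout can produce knots of many different crossing numbers depending on which crossing tiles are chosen as $T_9$ versus $T_{10}$. Running the full KnotScape census and confirming that nothing outside the listed knots survives is where the genuine work lies.
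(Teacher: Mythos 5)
Your proposal follows essentially the same route as the paper: assemble the unique 27-tile layout of Figure \ref{tile-numbers-1} from the seven building blocks, prune by Observation \ref{observation4}, symmetry, and planar isotopy, stratify by the number of crossing tiles (the paper handles the eight-crossing case by simply noting $8_{15}$ and then works through nine, ten, and eleven crossings), and identify the resulting alternating and non-alternating knots with KnotScape. The paper additionally introduces a couple of case-specific pruning observations (certain pairings of a partially filled block with particular filled blocks always reduce to tile number 22), but this is an optimization of the same finite search you describe.
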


Notice that this theorem is only referring to prime knots with mosaic number 6. There are certainly prime knots with tile number 27 and mosaic number 7 that are not included in this theorem. Also, the requirement that the tile number equals the minimal mosaic tile number is necessary here. As far as we know now (and will verify below), there are knots with mosaic number 6 and tile number 27 which have minimal mosaic number 32. Some of these are listed in the next theorem. Finally, notice that up to this point we have determined the tile number for every prime knot with crossing number 8 or less.

Again we claim that the minimally space-efficient mosaics for $9_3$, $9_4$, $9_{13}$, $9_{37}$, $9_{46}$, and $9_{48}$ must have ten crossing tiles. The minimally space-efficient mosaics for $9_{7}$, $9_{9}$, and $9_{24}$ must have eleven crossing tiles. None of the possible minimally space-efficient knot mosaics with exactly nine crossing tiles produce these knots. Similarly, the minimally space-efficient mosaics for $10_{1}$, $10_{3}$, $10_{12}$, $10_{22}$, $10_{34}$, $10_{63}$, $10_{65}$, $10_{78}$, $10_{140}$, $10_{142}$, and $10_{144}$ require eleven crossing tiles.

\begin{proof}  Similar to what we did in the proof of Theorem \ref{t=22}, we search for all of the prime knots that mosaic number 6 and tile number 27, which have a space-efficient mosaic as depicted in the fourth layout of Figure \ref{tile-numbers-1}. We simply build this layout using the $3 \times 3$ building blocks that result from the observations at the beginning of this section and shown again in Figure \ref{building-blocks}. We then choose specific crossing types for each crossing tile and see what knots we get.

\begin{figure}[h]
  \centering
  \includegraphics{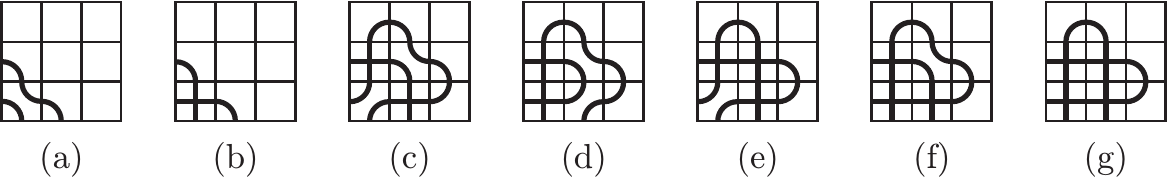}\\
  \caption{The seven building blocks resulting from the observations at the beginning of this section.}
  \label{building-blocks}
\end{figure}

For bookkeeping purposes, we note that the knot $8_{15}$ has  tile number 27, and this is the only knot with crossing number 8 for which we have not previously found the  tile number. A minimally space-efficient mosaic for it is included in the table of knots in Section \ref{table_of_knots}. We now know the  tile number for every prime knot with crossing number 8 or less, and from here we restrict our search to mosaics with nine or more crossing tiles.

Before we get started placing crossing tiles, we make a few more simple observations that apply to this particular case and help us reduce the number of possible configurations. Observe that if we place a partially filled building block with no crossing adjacent to the filled building block with two crossing tiles in positions $I^3$ and $I^4$ depicted in Figure \ref{building-blocks}(c), the resulting mosaic will always reduce to a mosaic with tile number 22. The same result holds if the two blocks are not adjacent and one of the adjacent blocks is the filled building block with three crossings depicted in Figure \ref{building-blocks}(e). The mosaics in Figure \ref{observ1-27} exhibit these scenarios. The same result also holds if the partially filled building block with one crossing is combined with two of the filled building blocks with two crossing tiles shown in Figure \ref{building-blocks}(c). Depending on the placement of these two filled blocks, the result will be equivalent to either Figure \ref{observ1-27}(a) or Figure \ref{observ1-27}(b) via a simple mosaic planar isotopy move that shifts the crossing in the partially filled block to another block.

\begin{figure}[h]
  \centering
  \includegraphics{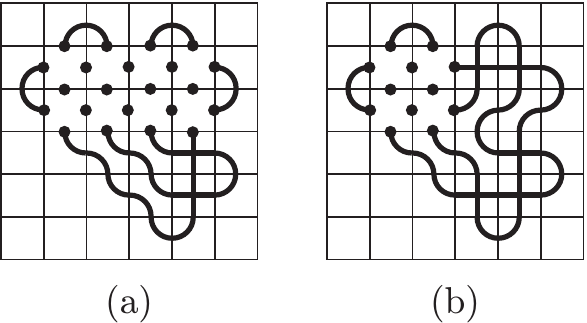}\\
  \caption{These two mosaics are not minimally space-efficient.}
  \label{observ1-27}
\end{figure}

First, we consider nine crossing tiles with the above observations in mind, together with the observations at the beginning of this section. Up to symmetry, there are only nine possible configurations of the building blocks after we eliminate the links, duplicate layouts that are equivalent to others via simple mosaic planar isotopy moves, and any mosaics for which the tile number can easily be reduced by a simple mosaic planar isotopy move. They are shown in Figure \ref{27tiles-9crossings-1}. Not all of these will result in distinct knots, and in several cases it is not difficult to see that they will result in the same knot. However, we include all of them here because they differ by more than just symmetries or a simple mosaic planar isotopy move.

\begin{figure}[h]
  \centering
  \includegraphics{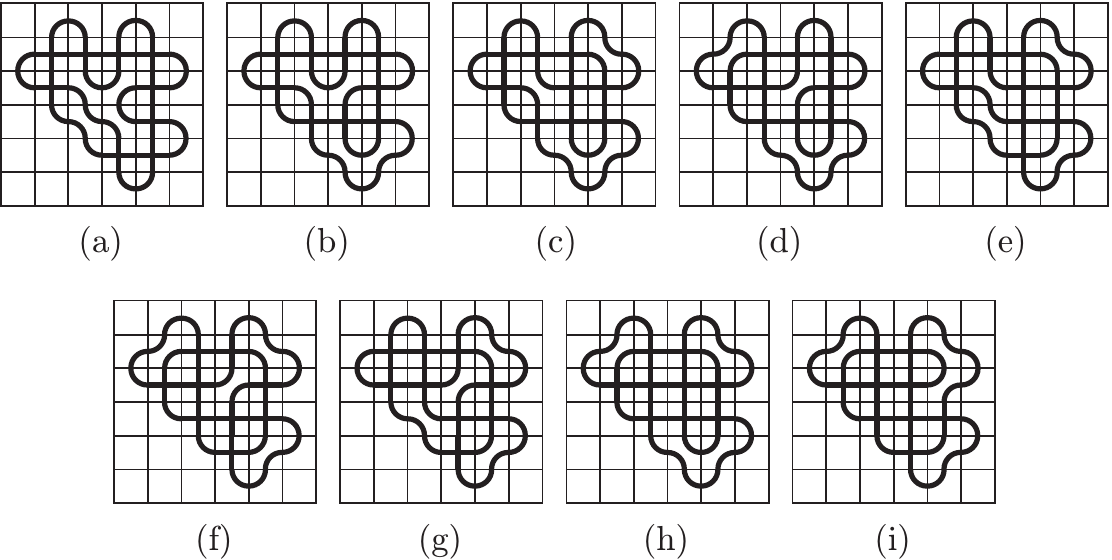}\\
  \caption{Only possible layouts, after elimination, with nine crossing tiles for a prime knot with  tile number 27.}
  \label{27tiles-9crossings-1}
\end{figure}

Choosing specific crossings so that the knots are alternating, we obtain only seven distinct knots. The only ones with  tile number 27 are Figure \ref{27tiles-9crossings-1}(a), which gives the $9_{1}$ knot, Figure \ref{27tiles-9crossings-1}(b), which gives us $9_{2}$, and Figures \ref{27tiles-9crossings-1}(h) and (i), which give us $9_{28}$. Each of the remaining layouts give knots with  tile number less than 27. In particular, Figures \ref{27tiles-9crossings-1}(c) and (d) are $9_{8}$, Figures \ref{27tiles-9crossings-1}(e) and (f) are $9_{17}$, and Figure \ref{27tiles-9crossings-1}(g) is $9_{20}$. None of these configurations give non-alternating knots with crossing number 9.

Second, we do the same for ten crossings. Again, we use the building blocks to build all possible configurations of the crossings, and up to symmetry, there are only six possibilities after eliminating any links and duplicate layouts that are equivalent via simple mosaic planar isotopy moves. These are shown in Figure \ref{27tiles-10crossings-1}.

\begin{figure}[h]
  \centering
  \includegraphics{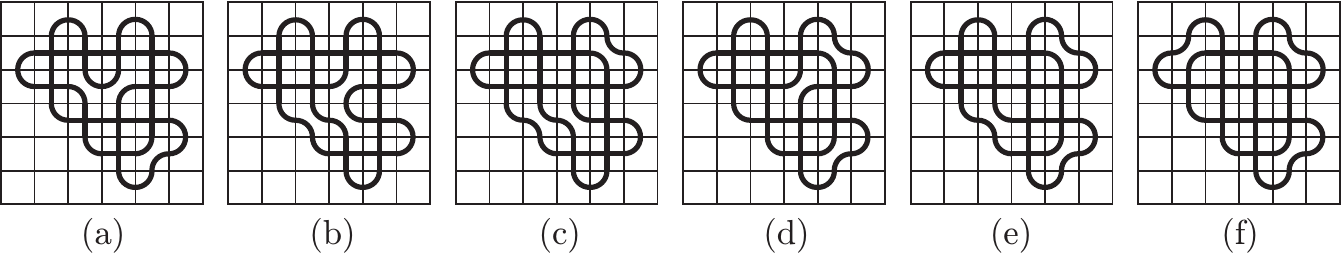}\\
  \caption{Only possible layouts, after elimination, with ten crossing tiles for a prime knot with  tile number 27.}
  \label{27tiles-10crossings-1}
\end{figure}

Choosing specific crossings so that the knots are alternating, we obtain only five distinct knots, all of which have  tile number 27. In particular, Figure \ref{27tiles-10crossings-1}(a) becomes the $10_{2}$ knot, Figure \ref{27tiles-10crossings-1}(b) becomes $10_{4}$, Figures \ref{27tiles-10crossings-1}(c) and (d) become $10_{28}$, Figure \ref{27tiles-10crossings-1}(e) becomes $10_{66}$, and Figure \ref{27tiles-10crossings-1}(f) becomes $10_{75}$. Choosing non-alternating crossings, we also get some knots with crossing number nine, but we do not obtain any non-alternating knots with crossing number ten. We can get $9_{3}$ from Figure \ref{27tiles-10crossings-1}(a), $9_{4}$ from Figure \ref{27tiles-10crossings-1}(b), $9_{13}$ from Figure \ref{27tiles-10crossings-1}(c), and $9_{37}$, $9_{46}$, and $9_{48}$ from Figure \ref{27tiles-10crossings-1}(f). All other knots that are obtained by considering non-alternating crossings can be drawn with fewer crossings or a lower tile number.

Third, we consider the case where the mosaic has eleven crossing tiles. In this instance, we end up with the five possible layouts shown in Figure \ref{27tiles-11crossings-1}, and again, not all of these are distinct. Choosing alternating crossing in each layout results in three distinct knots with crossing number eleven. Figures \ref{27tiles-11crossings-1}(a) and (b) become \ka{11}{107}, Figures \ref{27tiles-11crossings-1}(c) and (d) become \ka{11}{140}, and Figure \ref{27tiles-11crossings-1}(e) becomes \ka{11}{343}. (Note that, for knots with crossing number greater than ten, we are using the Dowker-Thistlethwaite name of the knot.) Choosing non-alternating crossings in each of the layouts results in several knots with crossing number nine or ten. In particular, we can obtain the knots $9_{24}$, $10_{63}$, $10_{65}$, $10_{78}$, $10_{140}$, $10_{142}$, and $10_{144}$ from Figure \ref{27tiles-11crossings-1}(a). We can obtain
$9_{7}$, $9_{9}$, $10_{12}$, $10_{22}$, and $10_{34}$ from Figure \ref{27tiles-11crossings-1}(c). And we can obtain $10_{1}$ and $10_{3}$ from Figure \ref{27tiles-11crossings-1}(e). All of these are shown in the table of knots in Section \ref{table_of_knots}. All other knots that are obtained by considering non-alternating crossings can be drawn with fewer crossings or a lower tile number.

\begin{figure}[h]
  \centering
  \includegraphics{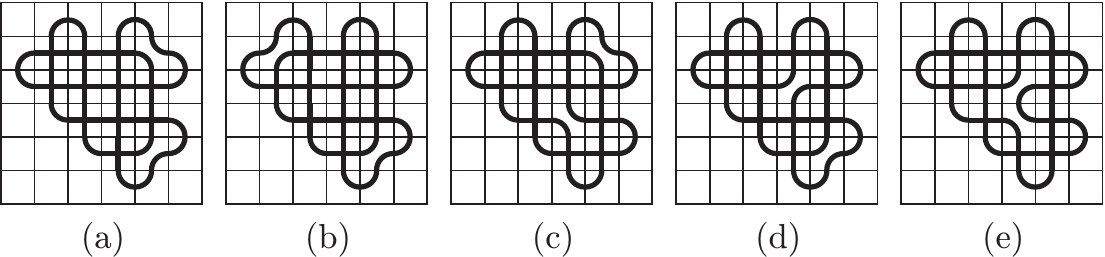}\\
  \caption{Only possible layouts, after elimination, with eleven crossing tiles for a prime knot with  tile number 27.}
  \label{27tiles-11crossings-1}
\end{figure}

Finally, by Observation \ref{observation4} we do not need to consider twelve or more crossing tiles in this layout, as no minimally space-efficient prime knot mosaics arise from this consideration. We have considered every possible placement of nine or more crossing tiles on the fourth layout in Figure \ref{tile-numbers-1} and have found every possible prime knot with mosaic number 6 and  tile number 27. They are exactly those listed in the theorem. All other prime knots with crossing number at least nine and mosaic number 6 must have minimal mosaic tile number 32.
\end{proof}

Now we know the  tile number for every prime knot with crossing number less than or equal to 8. Theorems \ref{t=22}, \ref{t=24}, and \ref{t=27} tell us the  tile number of some of the prime knots with crossing number 9, 10, and 11. Furthermore, we know that all other prime knots with mosaic number 6 must have minimal mosaic tile number 32 but not necessarily tile number 32. One problem that complicates the next step is that, as of the writing of this paper, we do knot know the mosaic number of all prime knots with crossing number 9 or more. That is, we do not know all prime knots with mosaic number 6. For this reason, we need to go through the same process as we did in the preceding proofs to determine which prime knots have mosaic number 6 and minimal mosaic tile number 32. By doing this, we will also be able to determine which prime knots have mosaic number greater than 6. The good news is that this is the final step in determining which prime knots have mosaic number 6 or less and determining the tile number or minimal mosaic tile numbers of all of these.

\begin{theorem}\label{t=32}
The only prime knots $K$ with mosaic number $m(K)=6$ and minimal mosaic tile number $t_M(K)=32$ are:
\begin{enumerate}
  \item $9_{10}$, $9_{16}$, $9_{35}$,
  \item $10_{11}$, $10_{20}$, $10_{21}$, $10_{61}$, $10_{62}$, $10_{64}$, $10_{74}$, $10_{76}$, $10_{77}$, $10_{139}$,
  \item \ka{11}{43}, \ka{11}{44}, \ka{11}{46}, \ka{11}{47}, \ka{11}{58}, \ka{11}{59}, \ka{11}{106}, \ka{11}{139}, \ka{11}{165}, \ka{11}{166}, \ka{11}{179}, \ka{11}{181}, \ka{11}{246}, \ka{11}{247}, \ka{11}{339}, \ka{11}{340}, \ka{11}{341}, \ka{11}{342}, \ka{11}{364}, \ka{11}{367},
  \item \kn{11}{71}, \kn{11}{72}, \kn{11}{73}, \kn{11}{74}, \kn{11}{75}, \kn{11}{76}, \kn{11}{77}, \kn{11}{78},
  \item  \ka{12}{119},\ka{12}{165}, \ka{12}{169}, \ka{12}{373}, \ka{12}{376}, \ka{12}{379}, \ka{12}{380}, \ka{12}{444},\ka{12}{503}, \ka{12}{722}, \ka{12}{803}, \ka{12}{1148}, \ka{12}{1149}, \ka{12}{1166},
  \item \ka{13}{1230}, \ka{13}{1236}, \ka{13}{1461}, \ka{13}{4573}
  \item \kn{13}{2399}, \kn{13}{2400}, \kn{13}{2401}, \kn{13}{2402}, \kn{13}{2403}.
\end{enumerate}
\end{theorem}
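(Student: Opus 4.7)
The plan is to follow the same template used in the proofs of Theorems \ref{t=22}, \ref{t=24}, and \ref{t=27}. By Theorem \ref{tile-numbers}, any space-efficient 6-mosaic realizing minimal mosaic tile number 32 is equivalent, up to symmetry, to the fifth layout in Figure \ref{tile-numbers-1}, which is assembled from four filled $3\times3$ corner blocks. The observations at the beginning of the section restrict each corner to one of the five filled-block options shown in Figure \ref{observ4}, with either 2, 3, or 4 crossing tiles. Observation \ref{observation4} further caps the combined usage of four-crossing blocks and two-crossing blocks of the $\{I^3, I^7\}$ type, and the standard cautions about reducibility, composite knots, and multi-component links prune the list further.

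Execution proceeds by the total crossing count $c$. By Observation \ref{observation4} together with the Howards--Kobin bound $(m-2)^2 - (m-3) = 13$ for $m=6$, the sweep covers $c \in \{9, 10, 11, 12, 13\}$ (the case $c=8$ yields only knots already handled in Theorems \ref{t=22} and \ref{t=24}). For each $c$, the plan is to list all admissible assignments of filled blocks to the four corners, quotient by the dihedral symmetries of the mosaic together with the elementary planar-isotopy moves that shift a crossing between adjacent blocks, and discard links and obviously reducible mosaics. For each surviving layout we first choose alternating crossings and use KnotScape to identify the resulting alternating prime knot, then iterate through all non-alternating crossing assignments and record the knots obtained. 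Any knot that coincides with one appearing in Theorems \ref{t=22}, \ref{t=24}, or \ref{t=27}, or that admits a representation with fewer non-blank tiles, is discarded; the remainder gives the list in the statement.

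Completeness follows because any prime knot with mosaic number 6 has a minimally space-efficient 6-mosaic, and Theorem \ref{tile-numbers} guarantees that this mosaic uses one of the five layouts in Figure \ref{tile-numbers-1}. Since the first four layouts are already exhausted by the preceding theorems, every remaining prime knot with $m(K)=6$ must arise in the sweep above, and conversely every knot produced by the sweep must have $t_M(K) = 32$. The main obstacle is the sheer combinatorial size of the enumeration: with four filled blocks, many configurations per crossing count, and numerous non-alternating sign choices each, the number of KnotScape queries grows rapidly. Particular care is needed both to recognize the ``roll a crossing'' isotopies that collapse seemingly distinct configurations into a single equivalence class and to certify that each listed knot of crossing number 11, 12, or 13 genuinely has $m(K)=6$ rather than smaller mosaic number; this bookkeeping, combined with the reliance on KnotScape to correctly name high-crossing alternating and non-alternating knots, will be the delicate part of the argument.
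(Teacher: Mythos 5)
Your proposal matches the paper's own argument essentially step for step: build the fifth (32-tile) layout from the filled building blocks, sweep the crossing count from 9 through 13, quotient by symmetry and planar-isotopy moves, discard links and reducible or previously-realized mosaics, and identify the survivors with KnotScape under alternating and all non-alternating crossing assignments, terminating via Observation \ref{observation4}. The only cosmetic difference is your additional appeal to the Howards--Kobin bound to cap the crossing count, which the paper does not need.
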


Notice again our restriction to prime knots with mosaic number 6. Additionally, notice that this theorem only refers to the minimal mosaic tile number of the knot, not the tile number. Again, this is because we only know that these two numbers are equal when they are less than or equal to 27. Some of these knots may have (and actually do have) tile number less than 32.

We claim that the minimally space-efficient mosaics for $9_{10}$, $9_{16}$, $10_{20}$, $10_{21}$, and $10_{77}$ need eleven crossing tiles. The minimally space-efficient mosaics for $9_{35}$, $10_{11}$, $10_{62}$, $10_{64}$, $10_{74}$, $10_{139}$, \ka{11}{106}, \ka{11}{139}, \ka{11}{166}, \ka{11}{181}, \ka{11}{341}, \ka{11}{342}, and \ka{11}{364} need twelve crossing tiles. And the minimally space-efficient mosaics for $10_{61}$, $10_{76}$, \ka{11}{44}, \ka{11}{47}, \ka{11}{58}, \kn{11}{76}, \kn{11}{77}, \kn{11}{78}, \ka{11}{165}, \ka{11}{246}, \ka{11}{339}, \ka{11}{340}, \ka{12}{119}, \ka{12}{165}, \ka{12}{169},  \ka{12}{376}, \ka{12}{379}, \ka{12}{444}, \ka{12}{803}, \ka{12}{1148}, and \ka{12}{1166} need thirteen crossing tiles.

\begin{proof}  We simply go through the same process that we did in the previous proof. We search for all of the prime knots that have mosaic number 6 and minimal mosaic tile number 32. Whatever prime knots that do not show up in this process and that we have not previously determined the  tile number for must have mosaic number greater than 6. We know from Theorem \ref{tile-numbers} that any prime knot with mosaic number 6 and minimal mosaic tile number 32 has a space-efficient mosaic with the fifth and final layout shown in Figure \ref{tile-numbers-1}.

As we have done several times previously, we use the building blocks to achieve all possible configurations, up to symmetry, of nine or more crossings within this mosaic. For this particular layout, we can only use the filled blocks, not the partially filled blocks. We can eliminate any layouts that do not meet the requirements of the observations, any multi-component links, any duplicate layouts that are equivalent to others via simple mosaic planar isotopy moves, and any mosaics for which the tile number can easily be reduced by a simple mosaic planar isotopy move.

First, in the case of nine crossings, after we eliminate the unnecessary layouts we end up with only one possibility, and it is shown in Figure \ref{32tiles-9crossings-1}. However, once we choose specific crossings in an alternating fashion, it is the knot $9_{8}$, which has  tile number 24. Nothing new arises from considering non-alternating crossings either.

\begin{figure}[h]
  \centering
  \includegraphics{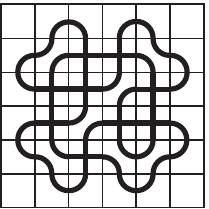}\\
  \caption{Only possible layout, after elimination, with nine crossing tiles for a prime knot with minimal mosaic tile number 32.}
  \label{32tiles-9crossings-1}
\end{figure}

Second, we do the same for ten crossings, and we end up with five possible layouts, shown in Figure \ref{32tiles-10crossings-1}. Choosing alternating crossings in each one, we again fail to get any prime knots with minimal mosaic tile number 32. Figure \ref{32tiles-10crossings-1}(a) is $10_{1}$, Figure \ref{32tiles-10crossings-1}(b) and (c) are $10_{34}$, and Figures \ref{32tiles-10crossings-1}(d) and (e) are $10_{78}$. Nothing new arises from considering non-alternating crossings either.

\begin{figure}[h]
  \centering
  \includegraphics{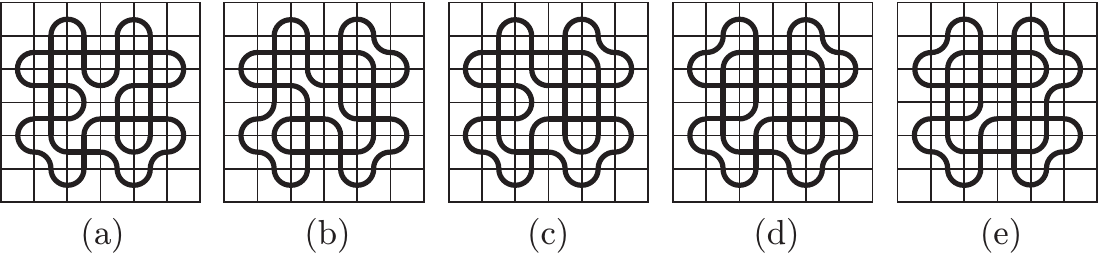}\\
  \caption{Only possible layouts, after elimination, with ten crossing tiles for a prime knot with minimal mosaic tile number 32.}
  \label{32tiles-10crossings-1}
\end{figure}

Third, we consider the case where the mosaic has eleven crossing tiles. In this instance, we end up with the ten possible layouts shown in Figure \ref{32tiles-11crossings-1}. With alternating crossings, the first layout is \ka{11}{140}, which we already known has  tile number 27. The remaining layouts, given alternating crossings, lead to six distinct knots with minimal mosaic tile number 32, and with non-alternating crossings we get ten additional knots that have minimal mosaic tile number 32. In particular, Figure \ref{32tiles-11crossings-1}(b) with alternating crossings is \ka{11}{43} and with non-alternating crossings can be made into \kn{11}{71}, \kn{11}{72}, \kn{11}{73}, \kn{11}{74}, and \kn{11}{75}. Figures \ref{32tiles-11crossings-1}(c) and (d) are \ka{11}{46} when using alternating crossings and can be made into $9_{16}$ or $10_{77}$ with non-alternating crossings. Figures \ref{32tiles-11crossings-1}(e) and (f) are \ka{11}{59} when using alternating crossings and can be made into $10_{20}$ with non-alternating crossings. Figures \ref{32tiles-11crossings-1}(g) and (h) are \ka{11}{179} when using alternating crossings and can be made into $9_{10}$ or $10_{21}$ with non-alternating crossings. Figure \ref{32tiles-11crossings-1}(i) with alternating crossings is \ka{11}{247}, and Figure \ref{32tiles-11crossings-1}(j) with alternating crossings is \ka{11}{367}. Neither of these last two provide new knots to our list when considering non-alternating crossings.

\begin{figure}[h]
  \centering
  \includegraphics{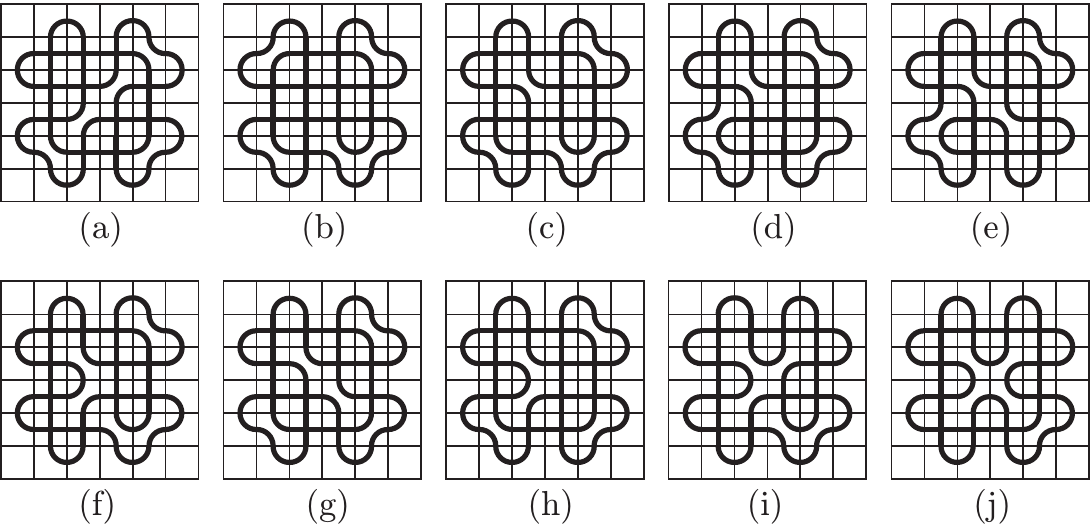}\\
  \caption{Only possible layouts, after elimination, with eleven crossing tiles for a prime knot with minimal mosaic tile number 32.}
  \label{32tiles-11crossings-1}
\end{figure}

Fourth, we consider the possibilities where the mosaic has twelve crossing tiles. In this case, we end up with the seven possible layouts shown in Figure \ref{32tiles-12crossings-1}. With alternating crossings, these layouts lead to five distinct knots with minimal mosaic tile number 32, and with non-alternating crossings we get thirteen additional knots that have minimal mosaic tile number 32. In particular, Figures \ref{32tiles-12crossings-1}(a) and (b) with alternating crossings are \ka{12}{373} and with non-alternating crossings can be made into $10_{62}$, $10_{64}$, $10_{139}$, \ka{11}{106}, or \ka{11}{139}. Figures \ref{32tiles-12crossings-1}(c) and (d) are \ka{12}{380} when using alternating crossings and can be made into $10_{11}$, \ka{11}{166}, or \ka{11}{341} with non-alternating crossings. Figure \ref{32tiles-12crossings-1}(e) is \ka{12}{503} when using alternating crossings and can be made into $9_{35}$, $10_{74}$, or \ka{11}{181} with non-alternating crossings. Figure \ref{32tiles-12crossings-1}(f) is \ka{12}{722} when using alternating crossings and can be made into \ka{11}{364} with non-alternating crossings. Figure \ref{32tiles-12crossings-1}(g) with alternating crossings is \ka{12}{1149} and with non-alternating crossings can be \ka{11}{342}.

\begin{figure}[h]
  \centering
  \includegraphics{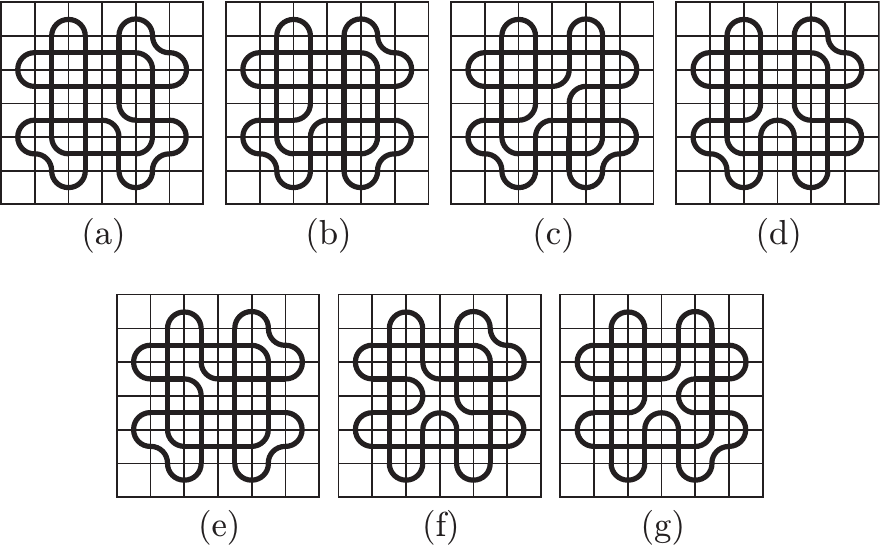}\\
  \caption{Only possible layouts, after elimination, with twelve crossing tiles for a prime knot with minimal mosaic tile number 32.}
  \label{32tiles-12crossings-1}
\end{figure}

Fifth, we consider what happens when we place thirteen crossing tiles on the mosaic. In this instance, we end up with the six possible layouts shown in Figure \ref{32tiles-13crossings-1}. With alternating crossings, the layouts lead to four distinct knots with minimal mosaic tile number 32, and with non-alternating crossings we get twenty-six additional knots that have minimal mosaic tile number 32. In particular, Figure \ref{32tiles-13crossings-1}(a) with alternating crossings is \ka{13}{1230} and with non-alternating crossings can be made into \ka{11}{44}, \ka{11}{47}, \kn{11}{76}, \kn{11}{77}, \kn{11}{78}, \ka{12}{119}, \kn{13}{2399}, \kn{13}{2400}, \kn{13}{2401}, \kn{13}{2402}, or \kn{13}{2403}. Figures \ref{32tiles-13crossings-1}(b) and (c) are \ka{13}{1236} when using alternating crossings and can be made into $10_{61}$, $10_{76}$, \ka{11}{58}, \ka{11}{165}, \ka{11}{340}, \ka{12}{165}, \ka{12}{376}, or \ka{12}{444} with non-alternating crossings. Figures \ref{32tiles-13crossings-1}(d) and (e) are \ka{13}{1461} when using alternating crossings and can be made into \ka{11}{246}, \ka{11}{339}, \ka{12}{169},  \ka{12}{379},  or \ka{12}{1148} with non-alternating crossings. Figure \ref{32tiles-13crossings-1}(f) is \ka{13}{4573} when using alternating crossings and can be made into \ka{12}{803} or \ka{12}{1166} with non-alternating crossings.

\begin{figure}[h]
  \centering
  \includegraphics{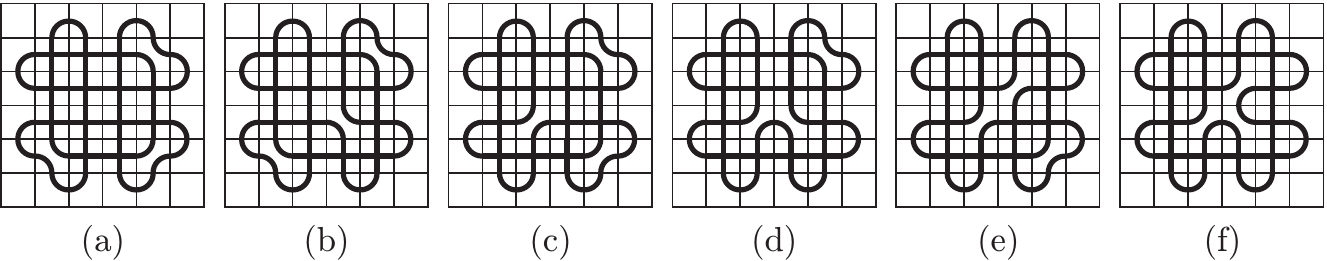}\\
  \caption{Only possible layouts, after elimination, with thirteen crossing tiles for a prime knot with minimal mosaic tile number 32.}
  \label{32tiles-13crossings-1}
\end{figure}

Finally, by Observation \ref{observation4}, we do not need to consider fourteen or more crossing tiles in this layout. We have considered every possible placement of nine or more crossing tiles on the final layout of Figure \ref{tile-numbers-1} and have found every possible prime knot with mosaic number 6 and minimal mosaic tile number 32.
\end{proof}

Because of the work we have completed, we now know every prime knot with mosaic number 6 or less. We also know the tile number or minimal mosaic tile number of each of these prime knots. In the table of knots in Section \ref{table_of_knots}, we provide minimally space-efficient knot mosaics for all of these. These preceding theorems lead us to the following interesting consequences.

\begin{corollary}\label{m>6} The prime knots with crossing number at least 9 not listed in Theorems \ref{t=22}, \ref{t=24}, \ref{t=27}, or \ref{t=32} have mosaic number 7 or higher.
\end{corollary}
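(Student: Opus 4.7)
The plan is to give a short contrapositive argument that combines Theorem \ref{tile-numbers} with the four enumeration theorems \ref{t=22}, \ref{t=24}, \ref{t=27}, and \ref{t=32}. The key point is that those four theorems together exhaust every prime knot whose minimally space-efficient 6-mosaic realizes any of the permissible tile numbers $22$, $24$, $27$, or $32$, so any prime knot with $m(K) = 6$ must appear on one of the lists.

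Fix a prime knot $K$ with crossing number at least $9$, and suppose toward contradiction that $m(K) \leq 6$. First I would rule out $m(K) \leq 5$: the mosaic numbers of all prime knots with crossing number at most $8$ are recorded in the introduction (via \cite{Lee2}), and the Howards--Kobin bound from \cite{Howards} forces any knot with $m(K) \leq 5$ to have crossing number at most $(5-2)^2 - 2 = 7$. Hence $m(K) = 6$ exactly, so $K$ admits a minimally space-efficient 6-mosaic $M$.

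Next I would verify that such an $M$ satisfies the hypothesis of Theorem \ref{tile-numbers}, namely that every row or every column is occupied. Indeed, a blank row or blank column would allow the diagram to be redrawn on a 5-mosaic, contradicting $m(K) = 6$. Theorem \ref{tile-numbers} then forces the tile number of $M$ to be one of $22$, $24$, $27$, or $32$. If it is $22$ or $24$, then $t_M(K) \leq 27$, and by the result from \cite{Heap1} that $t(K) = t_M(K)$ when $t_M(K) \leq 27$, the knot $K$ appears in Theorem \ref{t=22} or Theorem \ref{t=24}. If it is $27$ or $32$, then $K$ appears directly in Theorem \ref{t=27} or Theorem \ref{t=32}. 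In every case $K$ lies on one of the four lists, contradicting the hypothesis.

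I do not anticipate a serious obstacle: the entire content of this corollary is a bookkeeping consequence of the completeness of Theorems \ref{t=22}--\ref{t=32}, which is where all the real work has been done. The one subtlety worth stating explicitly is the verification that the hypothesis of Theorem \ref{tile-numbers} is automatic on a minimal 6-mosaic, which is the observation above about blank rows and columns.
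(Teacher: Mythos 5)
Your argument is correct and is essentially the paper's own (implicit) reasoning: the corollary is stated without proof as immediate bookkeeping from the exhaustive enumerations in Theorems \ref{t=22}--\ref{t=32} together with the known list of prime knots of mosaic number at most 5, which is exactly how you organize it. One minor imprecision: to invoke Theorem \ref{tile-numbers} you only need that a blank row and a blank column cannot \emph{coexist} (together they would confine the connected diagram to a $5\times 5$ subgrid), whereas a single blank column by itself only confines the knot to a $6\times 5$ region, so your stated justification is slightly stronger than what is needed or immediately true.
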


\begin{theorem}\label{min-mos-tile-number} The tile number of a knot is not necessarily equal to the minimal mosaic tile number of a knot.
\end{theorem}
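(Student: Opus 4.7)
The plan is to exhibit a single prime knot $K$ for which $t(K)$ is strictly smaller than $t_M(K)$. The earlier remark recorded in this section that $t(K)=t_M(K)$ whenever $t_M(K)\le 27$ tells us that any such witness must have $t_M(K)\ge 32$, so natural candidates are the knots appearing in Theorem \ref{t=32}. I will take one of these, for concreteness $9_{10}$, for which Theorem \ref{t=32} already provides $m(9_{10})=6$ and $t_M(9_{10})=32$; no additional work is required for that half of the desired inequality.

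The remaining task is to display a (necessarily non-minimal) mosaic of $9_{10}$ on a $7$-mosaic whose non-blank tile count is strictly less than $32$. The general tile-number bounds from \cite{Heap1} allow as few as $5(7)-8=27$ non-blank tiles on a $7$-mosaic, so the natural target is $t(9_{10})\le 27$. To construct such a diagram I would start from a standard $9$-crossing diagram of $9_{10}$ and arrange its crossings into a compact staircase inside a $7\times 7$ board, joining consecutive crossings by double-arc tiles along the staircase and closing off the two resulting strand ends with a minimal collection of corner tiles on the boundary. Directly counting the crossing tiles, the connector tiles along the staircase, and the closing boundary tiles verifies that no more than $27$ non-blank tiles are used, and KnotScape (used throughout the paper) confirms that the mosaic actually represents $9_{10}$.

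Combining the two halves gives $t(9_{10})\le 27<32=t_M(9_{10})$, which is exactly the assertion of the theorem. The main obstacle is the construction/verification step: the extra room gained by enlarging the board from $6\times 6$ to $7\times 7$ is only two rows and columns, and in every such layout one still has to fit nine crossings together with all of the ``return'' arcs needed to close the knot, so it is easy for a naive construction either to exceed $27$ non-blank tiles, to reduce to a simpler knot such as $9_5$, or to produce a multi-component link. The construction must therefore be chosen carefully and its knot type verified by an external invariant check before the chain of inequalities can be asserted.
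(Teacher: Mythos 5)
Your proposal matches the paper's proof essentially exactly: the paper also takes $9_{10}$ as the witness, cites Theorem \ref{t=32} for $t_M(9_{10})=32$, and exhibits a $7$-mosaic of $9_{10}$ with only $27$ non-blank tiles (Figure \ref{knot9_10}), concluding $t(9_{10})\le 27<32$. The only difference is that the paper supplies the explicit $7$-mosaic rather than a construction sketch, which is the one ingredient your argument correctly identifies as the remaining work.
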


\begin{proof}
According to Theorem \ref{t=32}, the minimal mosaic tile number for $9_{10}$ is 32. However, on a $7$-mosaic, this knot can be represented using only 27 non-blank tiles, as depicted in Figure \ref{knot9_10}.
\begin{figure}[h]
  \centering
  \includegraphics{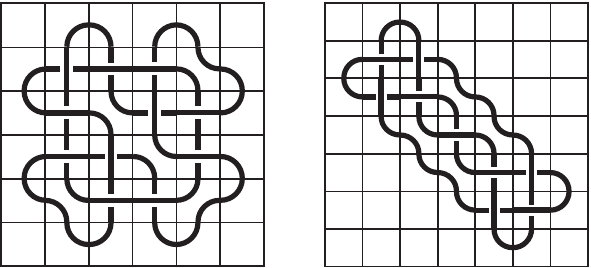}\\
  \caption{The $9_{10}$ knot represented as a minimally space-efficient 6-mosaic with minimal mosaic tile number 32 and as a space-efficient 7-mosaic with tile number 27.}
  \label{knot9_10}
\end{figure}
Also note that, as a 7-mosaic, this knot could be represented with only nine crossings, whereas eleven crossings were required to represent it as a 6-mosaic.
\end{proof}

\section{Table of Prime Knots}\label{table_of_knots}

We include a table of knots below, with an example of a minimally space-efficient knot mosaic for each prime knot for which we know the minimal mosaic tile number. For each knot mosaic, both the mosaic number and minimal mosaic tile number are realized, but the crossing number may not be realized. The tile number of the knot is realized in each mosaic unless the tile number of the mosaic is 32. If the knot mosaic is marked with an asterisk ({\Large $\ast$}) then the given mosaic has more crossing tiles than the crossing number for the represented knot, but it is the minimum number of crossing tiles needed in order for the minimal mosaic tile number to be realized.

\vspace{.2in}

\begin{center}

  \includegraphics{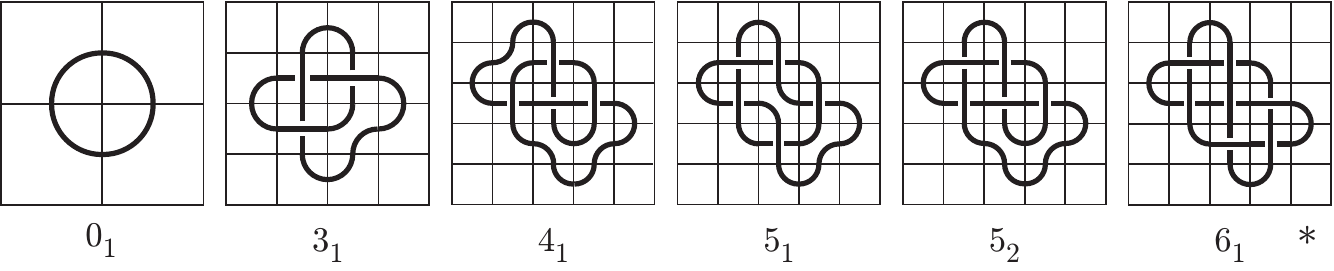} \vspace{.15in}

  \includegraphics{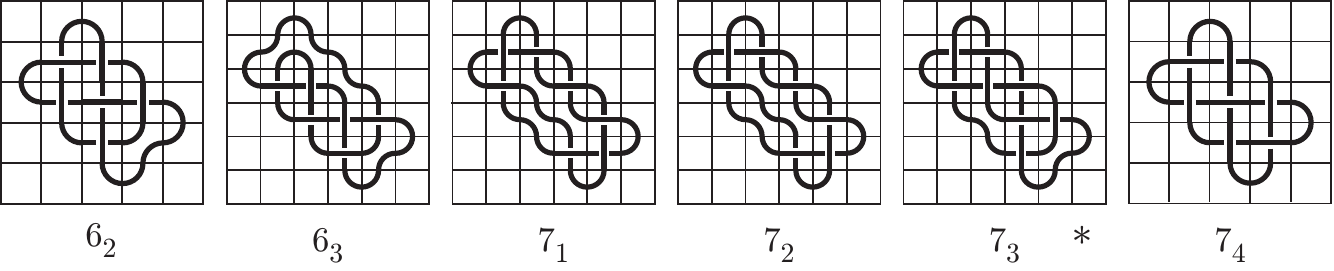} \vspace{.15in}

  \includegraphics{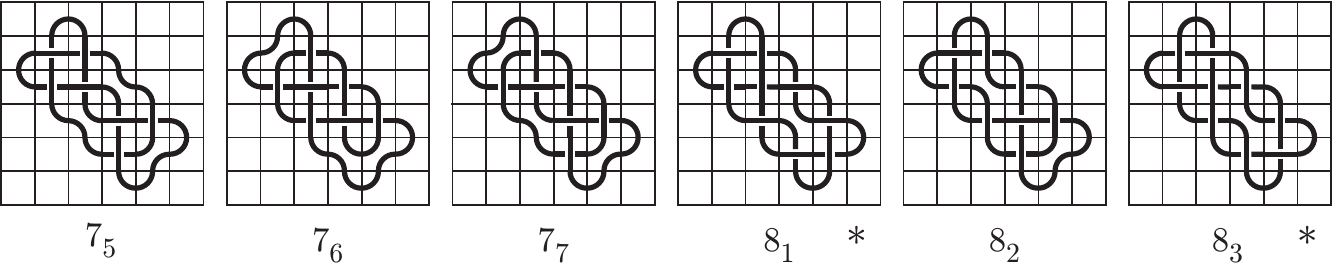} \vspace{.15in}

  \includegraphics{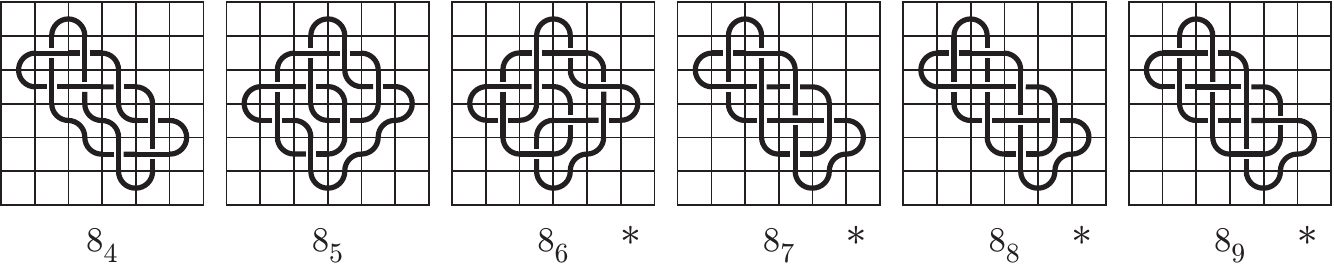} \vspace{.15in}

  \includegraphics{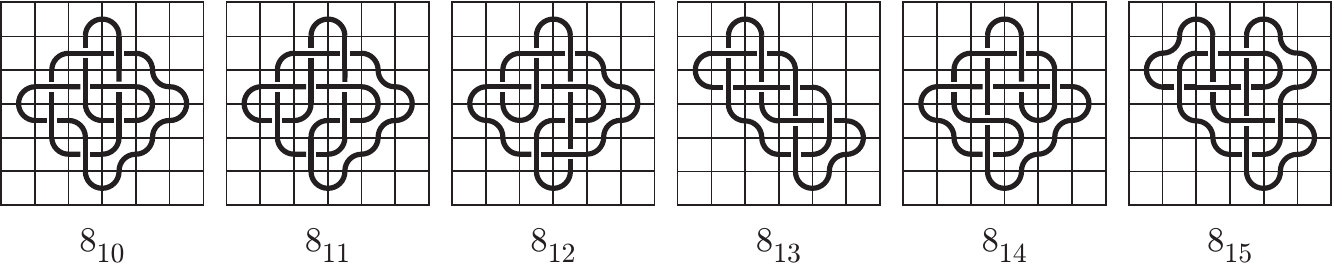} \vspace{.15in}

  \includegraphics{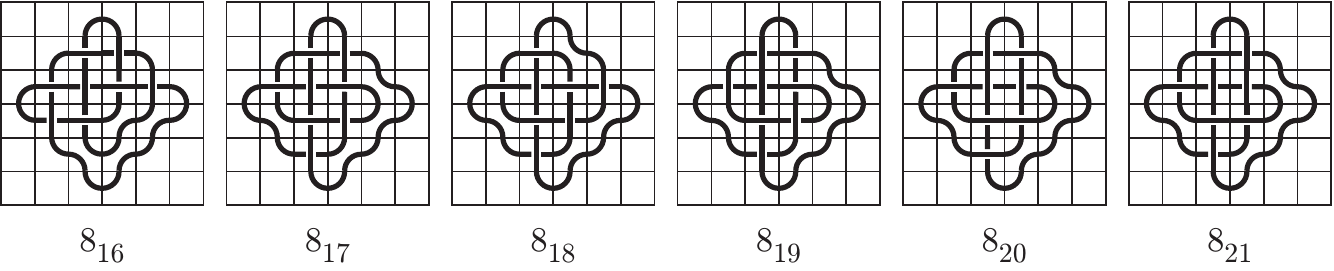} \vspace{.15in}

  \includegraphics{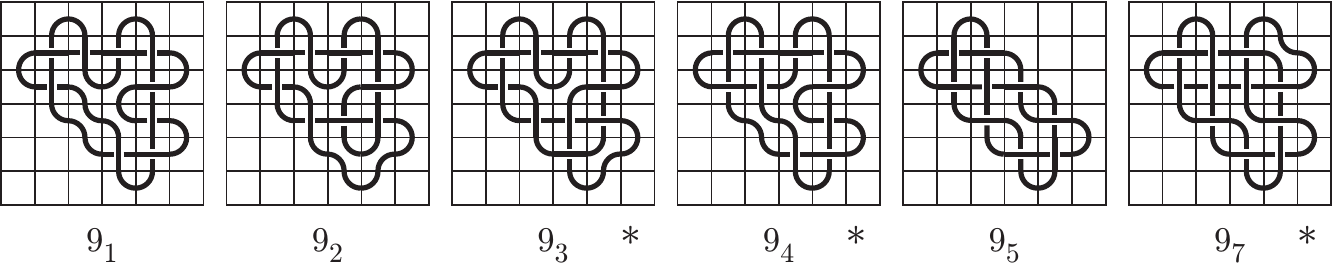} \vspace{.15in}

  \includegraphics{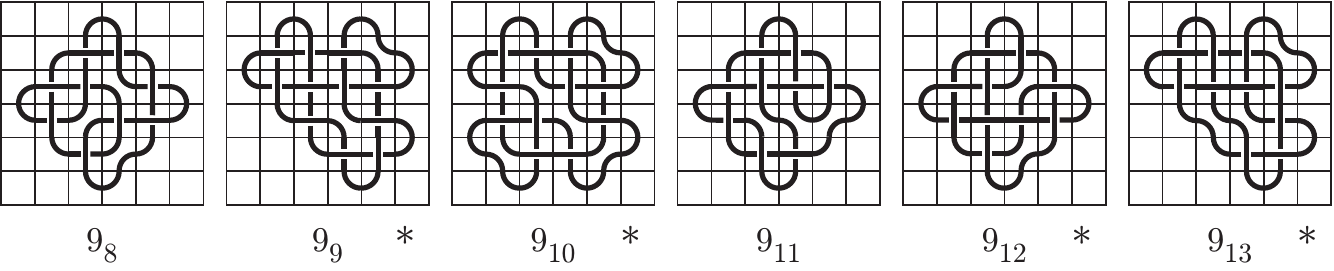} \vspace{.15in}

  \includegraphics{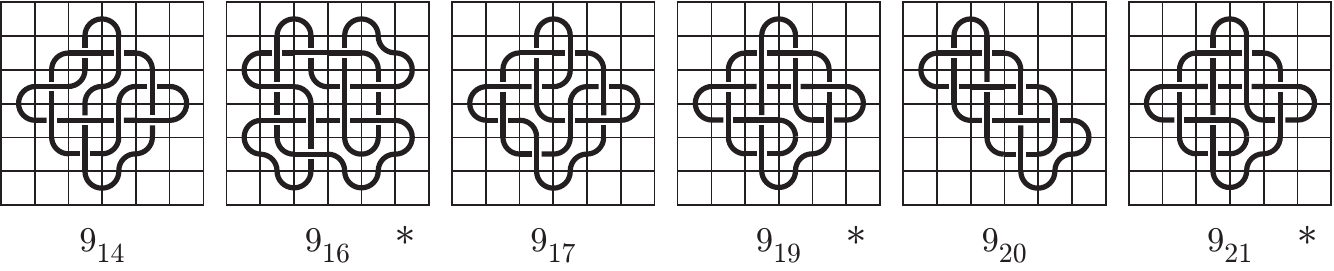} \vspace{.15in}

  \includegraphics{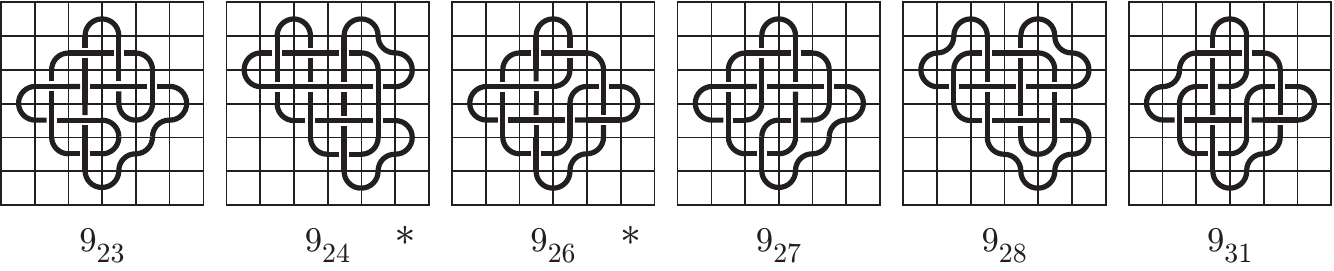} \vspace{.15in}

  \includegraphics{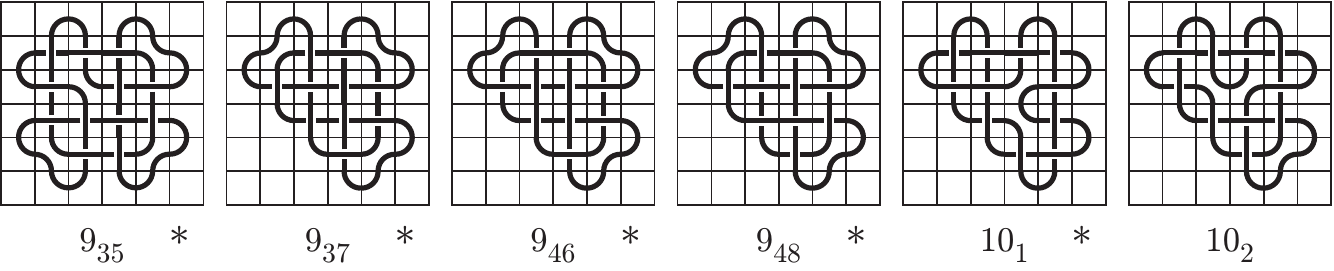} \vspace{.15in}

  \includegraphics{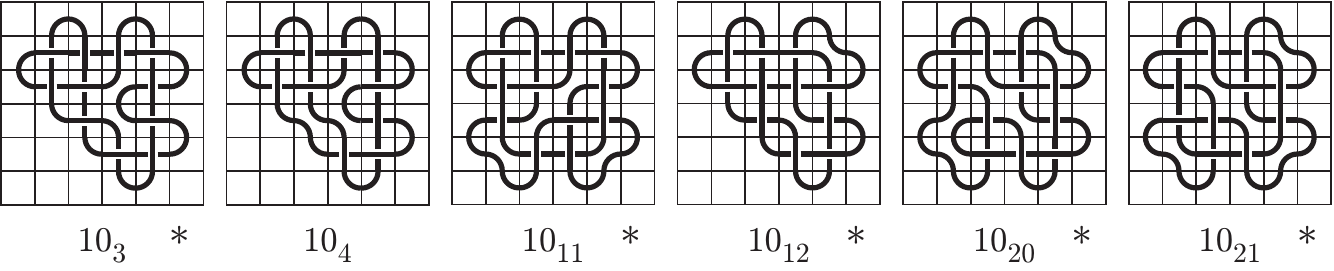} \vspace{.15in}

  \includegraphics{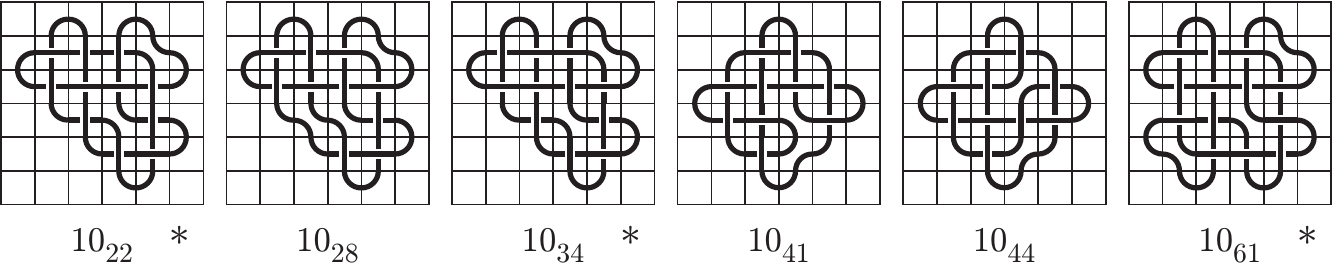} \vspace{.15in}

  \includegraphics{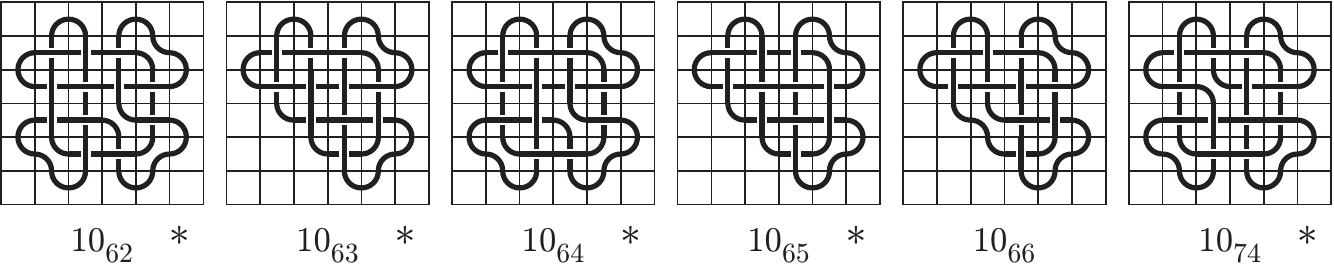} \vspace{.15in}

  \includegraphics{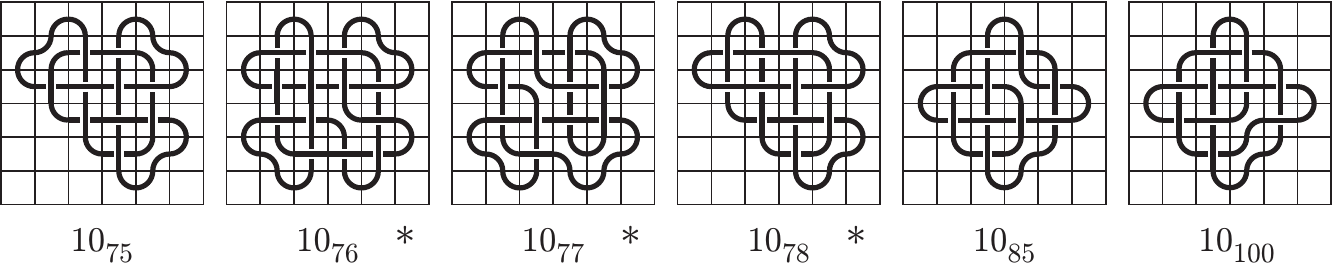} \vspace{.15in}

  \includegraphics{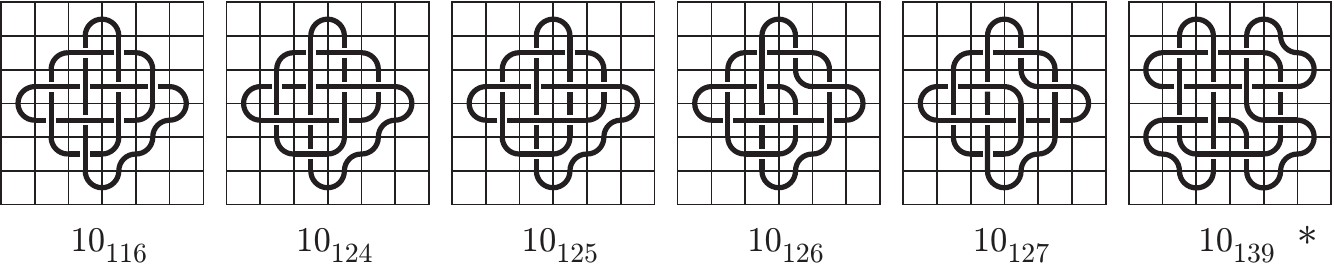} \vspace{.15in}

  \includegraphics{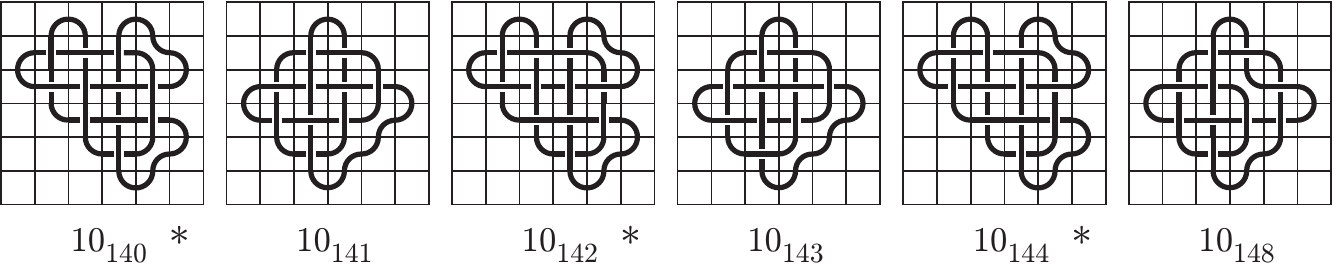} \vspace{.15in}

  \includegraphics{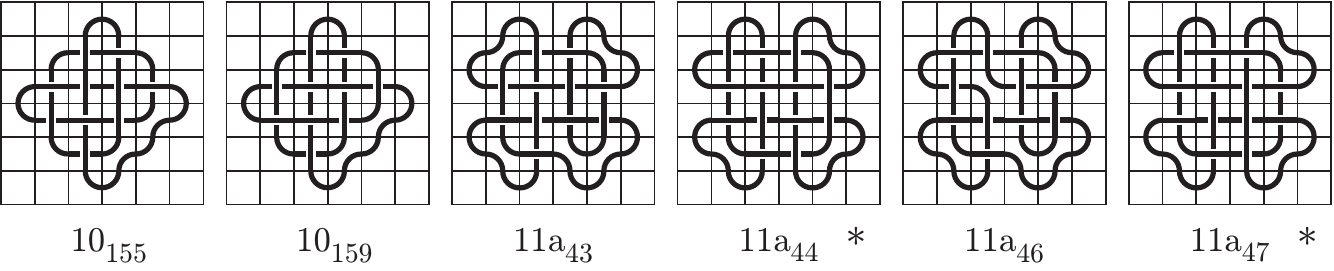} \vspace{.15in}

  \includegraphics{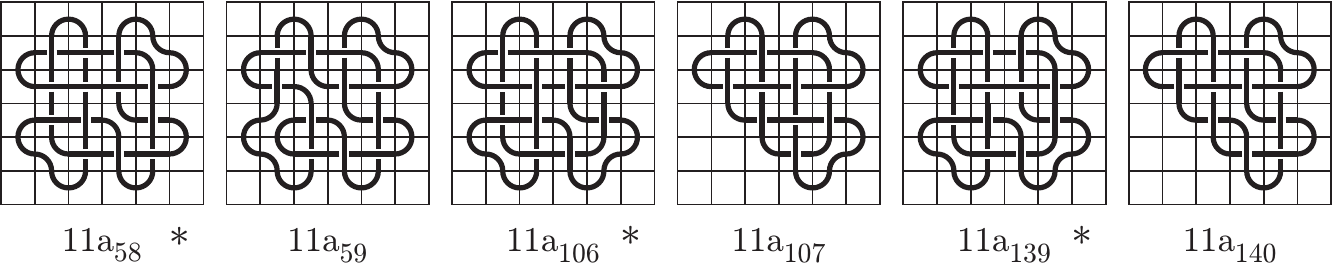} \vspace{.15in}

  \includegraphics{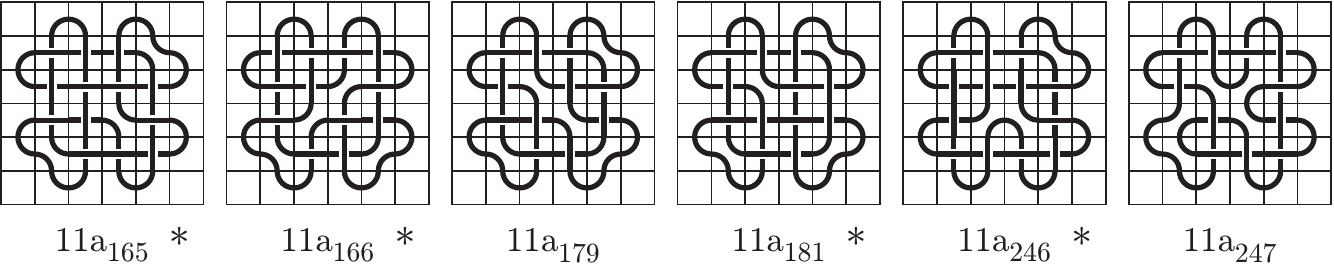} \vspace{.15in}

  \includegraphics{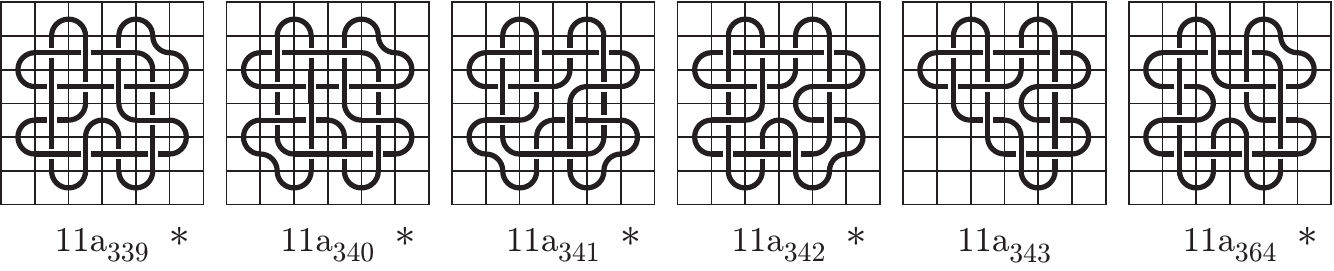} \vspace{.15in}

  \includegraphics{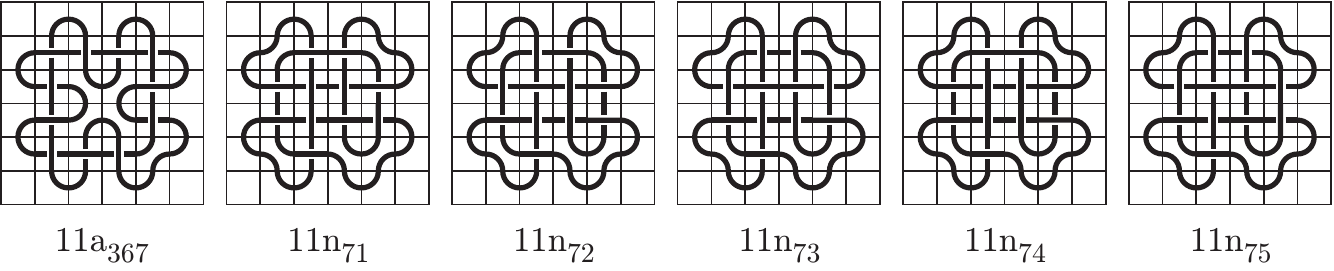} \vspace{.15in}

  \includegraphics{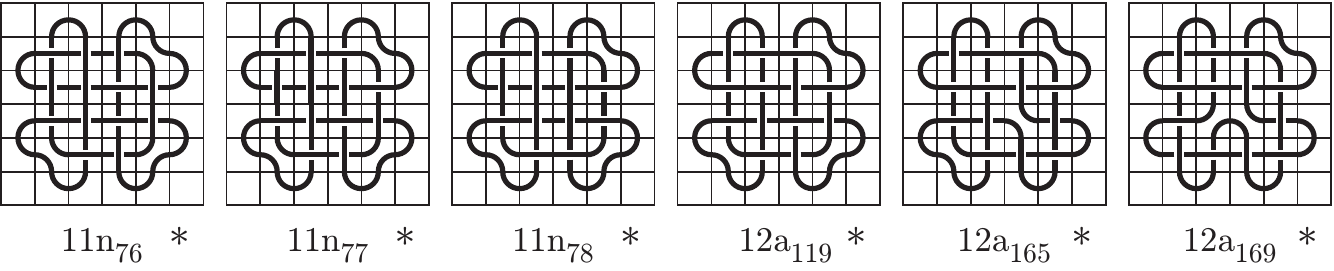} \vspace{.15in}

  \includegraphics{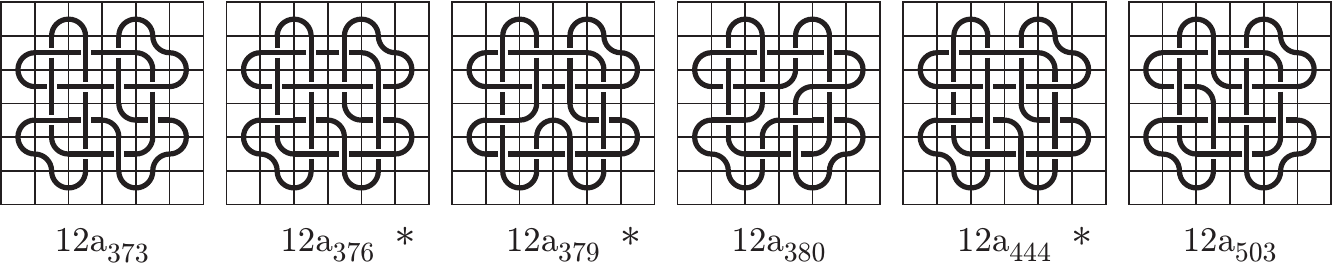} \vspace{.15in}

  \includegraphics{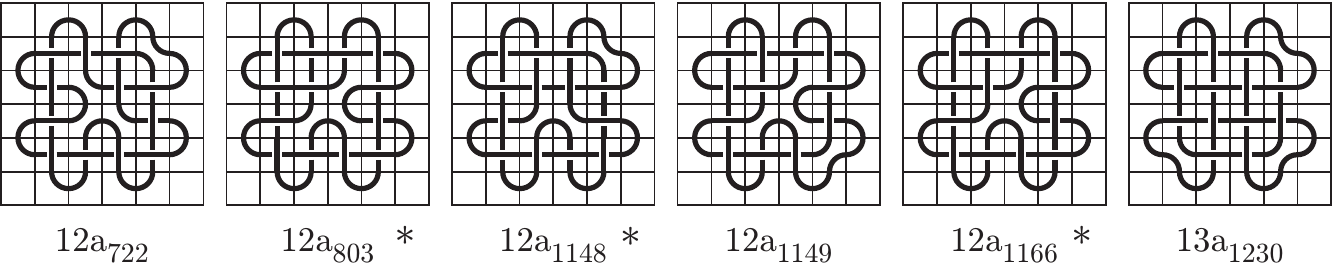} \vspace{.15in}

  \includegraphics{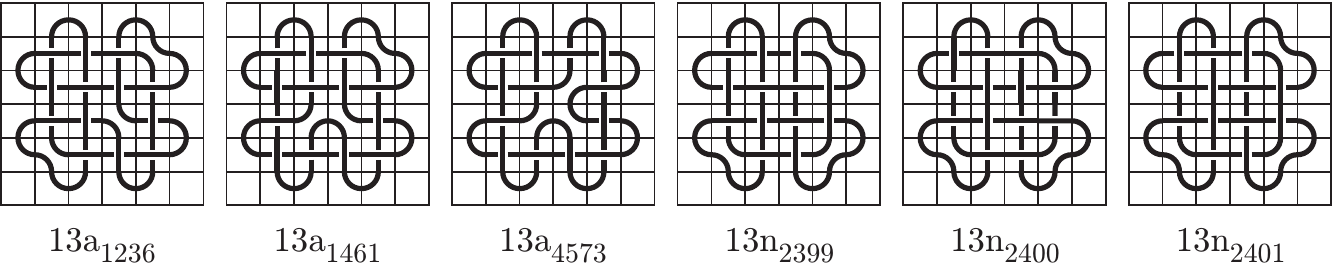} \vspace{.15in}

  \includegraphics{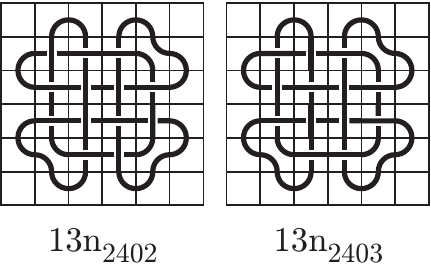}

\end{center}

\bibliographystyle{amsplain}
\bibliography{bibliography}
\addcontentsline{toc}{section}{\refname}% Add references to ToC (and bookmarks)

\end{document}